\documentclass[11pt]{article}

\usepackage[utf8]{inputenc}
\usepackage{amsmath,amssymb,amsthm,amsfonts}
\usepackage{geometry}
\usepackage{authblk}
\usepackage{enumitem}

\numberwithin{equation}{section}

\theoremstyle{plain}
\newtheorem{theorem}{Theorem}[section]
\newtheorem{proposition}[theorem]{Proposition}
\newtheorem{lemma}[theorem]{Lemma}
\newtheorem{corollary}[theorem]{Corollary}
\theoremstyle{definition}
\newtheorem{definition}[theorem]{Definition}
\theoremstyle{remark}
\newtheorem{remark}[theorem]{Remark}

\DeclareMathOperator{\GL}{GL}
\DeclareMathOperator{\LGL}{LGL}
\DeclareMathOperator{\LM}{LM}
\DeclareMathOperator{\diag}{diag}
\DeclareMathOperator{\St}{St}
\newcommand{\Lc}{L_K(R_2)}
\newcommand{\Words}{\{e,f\}^*}
\newcommand{\Cantor}{\{e,f\}^{\mathbb N}}
\newcommand{\LGLtwo}{\LGL_2(K)}
\newcommand{\Mtwo}{\mathcal M_2(K)}

\title{Matrix generators for the unit groups of $L_K(1,d)$}
\author[1]{Huynh Viet Khanh}
\author[1]{Vo Hoang Thanh}
\affil[1]{Department of Mathematics and Informatics, Ho Chi Minh City University of Education, Ho Chi Minh City, Vietnam\\
\texttt{khanhhv@hcmue.edu.vn}; \texttt{kv.thanhhv@hcmue.edu.vn}}
\date{}

\begin{document}
\maketitle

\begin{abstract}
Let $K$ be a field and put $L_d=L_K(R_d)\cong L_K(1,d)$. Ordered leaf sets in the rooted $d$-ary tree determine copies of general linear groups over $K$ inside $L_d^\times$. We prove that these copies generate $L_d^\times$ for every $d\geq2$. In the binary case, $L_2^\times=\langle 1+eaf^*,1+fbe^*:a,b\in L_2\rangle$. We characterize finite generation of $L_d^\times$, determine the subgroup represented by monomial matrices, and embed $\GL_\infty(K)$ in $L_2^\times$. Over a finite field, finite presentability of $L_d^\times$ is equivalent to finite generation of the unstable $K_2$-group $K_2(n,L_d)$ for every $n=1+r(d-1)\geq5$, where $r\geq0$; we also compute $K_2(L_d)$.
\end{abstract}

\section{Introduction}

For $d\geq2$, let $R_d$ be the rose with one vertex and $d$ loops, and put $L_d=L_K(R_d)\cong L_K(1,d)$. In the terminology of Higman--Thompson groups, the leaves of a finite full subtree of the rooted $d$-ary tree form a finite basis obtained from the one-element basis by simple expansions; see \cite[Section~1]{Pardo}. If $C=(\gamma_1,\ldots,\gamma_n)$ is an ordering of these leaves, then the elements $\gamma_i\gamma_j^*$ form a complete system of matrix units. Thus $C$ determines a unital copy of $M_n(K)$ in $L_d$ and an isomorphism $\Theta_C:M_n(L_d)\to L_d$. Permutation matrices yield the usual copy of the Higman--Thompson group $G_{d,1}$ in $L_d^\times$. The isomorphisms $\Theta_C$ are special cases of the matrix-ring isomorphisms studied in \cite{AbramsAnhPardo}.

The binary algebra has a further universality property. Write $L=L_K(R_2)$ and denote the two loops by $e$ and $f$. Brownlowe and S\o rensen proved that every Leavitt path algebra of a countable graph over $K$ embeds in $L$ \cite[Theorem~4.1]{BrownloweSorensen}. Freeland defined the Leavitt general linear group as the subgroup of $L^\times$ generated by the copies of $\GL_n(K)$ associated with finite binary trees \cite[Definitions~4.3.9--4.3.10]{Freeland}. He asked whether this group coincides with $L^\times$ \cite[p.~173]{Freeland}.

We use the following convenient form of Freeland's construction. For ordered $d$-ary leaf sets $A=(\alpha_1,\ldots,\alpha_n)$ and $B=(\beta_1,\ldots,\beta_n)$ and for $M=(m_{ij})\in\GL_n(K)$, set
$$
u(A,M,B)=\sum_{i,j=1}^n m_{ij}\alpha_i\beta_j^*.
$$
Let $\LGL_d(K)$ be the subgroup generated by the elements $u(A,M,B)$. In the binary case, the factorization $u(A,M,B)=u(A,M,A)u(A,I_n,B)$ identifies $\LGL_2(K)$ with Freeland's group, since the second factor lies in the standard copy of $V=G_{2,1}$.

Our first main result is $\LGL_d(K)=L_d^\times$ for every field $K$ and every $d\geq2$. The elements $u(A,M,B)$ are not closed under multiplication, so the assertion concerns the group they generate. The proof has two ingredients. First, if $\rho$ and $\sigma$ are prefix-incomparable, then $1+\rho a\sigma^*\in\LGL_d(K)$ for every $a\in L_d$. It follows that $\Theta_C(E_n(L_d))\leq\LGL_d(K)$. Second, $L_d$ is a purely infinite simple $\mathrm{GE}$-ring, and
$$
(L_d^\times)_{\mathrm{ab}}\cong K_1(L_d)
\cong K^\times/(K^\times)^{d-1}.
$$
The map induced by the scalar inclusion $K^\times\to L_d^\times$ is surjective onto $(L_d^\times)_{\mathrm{ab}}$, so $\GL_n(L_d)=E_n(L_d)D_n(K)$. The isomorphism $\Theta_C$ sends the elementary factor into $\LGL_d(K)$, while the diagonal factor has the form $u(C,D,C)$. For $d=2$, we have $K_1(L)=0$, and
$$
L^\times=\left\langle 1+eaf^*,\ 1+fbe^*:a,b\in L\right\rangle.
$$

In the binary case, the units represented by monomial matrices form a subgroup $\mathcal M_2(K)$, and
$$
\mathcal M_2(K)\cong
C_{\mathrm{lc}}(\{e,f\}^{\mathbb N},K^\times)\rtimes V,
$$
where $V$ acts by pullback. We also use the matrix units $(e^{i-1}f)(e^{j-1}f)^*$ to embed $\GL_\infty(K)$ in $L^\times$. Finally, $L_d^\times$ is finitely generated if and only if $K$ is finite. Necessity follows by comparing coefficients in the basis associated with a choice of special edge; sufficiency follows from finite generation of a suitable elementary group.

Finite presentation requires a separate argument. If $K=\mathbb F_q$ and $n=1+r(d-1)\geq5$, where $r\geq0$, then
$$
L_d^\times\text{ is finitely presented}
\text{ if and only if }
K_2(n,L_d)\text{ is finitely generated}.
$$
The same condition characterizes finite presentability of $L_d^\times/K^\times$ and $E_n(L_d)$. Here $K_2(n,L_d)$ is the kernel of $\St_n(L_d)\to E_n(L_d)$; for $n\geq5$, it is naturally isomorphic to the Schur multiplier $H_2(E_n(L_d),\mathbb Z)$.

The stable $K_2$-group is
$$
K_2(L_d)\cong
\{\lambda\in\mathbb F_q^\times:\lambda^{d-1}=1\}.
$$
This group is finite and vanishes when $K=\mathbb F_2$ or $d=2$. The calculation does not determine $K_2(n,L_d)$, since the usual stability theorem does not apply: the Bass stable rank of $L_d$ is infinite \cite[Theorem~2.8(2)]{AraPardoStableRank}. Thus finite presentability is reduced to finite generation of $K_2(n,L_d)$.

Section~\ref{sec:leaf-sets} introduces leaf sets and the associated matrix units. Sections~\ref{sec:leaf-matrix} and \ref{sec:full-unit-group} prove that $\LGL_2(K)=L^\times$. Section~\ref{sec:finite-generation} establishes the finite-generation criterion, and Section~\ref{sec:subgroups} treats the monomial subgroup and $\GL_\infty(K)$. Section~\ref{sec:higher-arity} extends the generation results to $d$ loops. Section~\ref{sec:finite-presentability} relates finite presentability to unstable $K_2$.

\section{Leaf sets and matrix units}\label{sec:leaf-sets}

Let $\Words$ be the free monoid on $\{e,f\}$, with identity $1$. We identify its elements with the finite paths in $R_2$. The rooted binary tree $T_2$ has vertex set $\Words$, root $1$, and edges joining each word $\alpha$ to its two children $\alpha e$ and $\alpha f$. We write $\alpha\preccurlyeq\beta$ if $\beta=\alpha\gamma$ for some $\gamma\in\Words$. Two words are \emph{prefix-incomparable} if neither is a prefix of the other.

The infinite path space is $\Cantor$, equipped with the product topology. For $\alpha\in\Words$, the cylinder determined by $\alpha$ is
$$
Z(\alpha)=\{\alpha\xi:\xi\in\Cantor\}.
$$
The sets $Z(\alpha)$ are compact open subsets of $\Cantor$ and form a basis for its topology. We use the cylinder notation of \cite[Section~5]{BrownloweSorensen}.

If $\alpha=a_1\cdots a_r$, put $\alpha^*=a_r^*\cdots a_1^*$. The defining relations of $L$ say that
$$
\alpha^*\beta=
\begin{cases}
1,&\text{if }\alpha=\beta,\\
\gamma,&\text{if }\beta=\alpha\gamma\text{ and }|\gamma|>0,\\
\gamma^*,&\text{if }\alpha=\beta\gamma\text{ and }|\gamma|>0,\\
0,&\text{if }\alpha\text{ and }\beta\text{ are prefix-incomparable}.
\end{cases}
$$
In particular, $\alpha^*\alpha=1$ for every $\alpha\in\Words$.

For later coefficient arguments, we also fix the standard basis obtained by choosing a special edge. Choose $e$ as the special edge at the unique vertex of $R_2$. Let $\mathcal B$ consist of $1$, all nonempty real paths, all nonempty ghost paths, and all monomials $\alpha\beta^*$ with $|\alpha|,|\beta|>0$ for which the last edges of $\alpha$ and $\beta$ are not both $e$. By \cite[Theorem~3.7]{LopatkinNam}, $\mathcal B$ is a basis of $L_A(R_2)$ over every commutative unital coefficient ring $A$. Consequently, if $A$ is a unital subring of $K$, then the canonical homomorphism induced by $A\hookrightarrow K$,
$$
L_A(R_2)\longrightarrow L_K(R_2)
$$
is injective, and its image is the $A$-span of $\mathcal B$.

The terminology for finite subsets of $\Words$ is not uniform. In the Higman--Thompson literature, they occur as finite bases obtained by simple expansion; see \cite[Section~1]{Pardo}. Freeland uses the term \emph{leaf set} in the binary-tree model of Thompson's group $V$ \cite[pp.~41--43]{Freeland}. We follow Freeland, since our matrix construction is based on his definition.
A finite rooted subtree $S$ of $T_2$ is \emph{full} if it contains the root, is closed under predecessors, and every vertex of $S$ has either both children in $S$ or neither child in $S$. A vertex of $S$ with no children in $S$ is a leaf. A finite subset $C\subseteq\Words$ is a \emph{leaf set} if it is the set of leaves of a finite full rooted subtree.

Equivalently, a finite set $C=\{\gamma_1,\ldots,\gamma_n\}$ is a leaf set if
$$
\Cantor=\bigsqcup_{i=1}^n Z(\gamma_i).
$$
A third equivalent description is obtained by simple expansions. A \emph{simple expansion} replaces a word $\gamma$ by its two children $\gamma e$ and $\gamma f$. The leaf sets are precisely the sets obtained from $\{1\}$ by finitely many simple expansions.

An \emph{ordered leaf set} is an ordering $C=(\gamma_1,\ldots,\gamma_n)$ of a leaf set. Until Section~\ref{sec:higher-arity}, leaf sets are understood to belong to the binary tree $T_2$.

\begin{lemma}\label{lem:leaf-set-relations}
A finite set $C=\{\gamma_1,\ldots,\gamma_n\}\subseteq\Words$ is a leaf set if and only if
$$
\gamma_i^*\gamma_j=\delta_{ij}1
\quad\text{for }1\leq i,j\leq n,
\qquad\text{and}\qquad
\sum_{i=1}^n\gamma_i\gamma_i^*=1.
$$
\end{lemma}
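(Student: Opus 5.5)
The plan is to translate the two algebraic conditions into combinatorial statements about cylinders, and then use the equivalent description of leaf sets as finite families whose cylinders partition $\Cantor$. Throughout, I take the elements $\gamma_i$ to be pairwise distinct, as is implicit in writing $C$ as a set of $n$ elements.

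For the forward direction, let $C$ be a leaf set. The leaves of a full subtree are pairwise prefix-incomparable: a proper prefix of a vertex of $S$ lies in $S$ and has a child in $S$, so it is not a leaf. The displayed formula for $\alpha^*\beta$ therefore gives $\gamma_i^*\gamma_j=\delta_{ij}1$. For the sum, I will induct on the number of simple expansions needed to reach $C$ from $\{1\}$. The base case is $1\cdot1^*=1$. For the inductive step, replacing $\gamma$ by $\gamma e,\gamma f$ changes the sum by
$$\gamma(ee^*+ff^*)\gamma^*-\gamma\gamma^*=0,$$
using the Cuntz--Krieger relation $ee^*+ff^*=1$ at the unique vertex of $R_2$.

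For the converse, assume both relations hold. First I show the $\gamma_i$ are pairwise prefix-incomparable. If $\gamma_j=\gamma_i\delta$ with $|\delta|>0$ and $i\neq j$, then the formula gives $\gamma_i^*\gamma_j=\delta\neq0$, because $\delta$ is a nonempty real path and hence nonzero (it belongs to the basis $\mathcal B$). This contradicts $\gamma_i^*\gamma_j=0$. So the cylinders $Z(\gamma_i)$ are pairwise disjoint. It remains to show that they cover $\Cantor$.

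I expect the covering step to be the main obstacle, because it has to extract combinatorial information from the algebraic identity $\sum\gamma_i\gamma_i^*=1$. The approach is to reduce to a common length. Let $N=\max|\gamma_i|$. For each $\gamma_i$ of length less than $N$, expand $\gamma_i\gamma_i^*=\sum_{|\mu|=N-|\gamma_i|}\gamma_i\mu\mu^*\gamma_i^*$, which follows by iterating $ee^*+ff^*=1$. Since the cylinders are disjoint, the words $\gamma_i\mu$ obtained this way are pairwise distinct words of length $N$. The identity then becomes $\sum_{\nu\in W}\nu\nu^*=1$ for some set $W$ of words of length $N$. Iterating the Cuntz--Krieger relation also gives $\sum_{|\nu|=N}\nu\nu^*=1$. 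Subtracting, $\sum_{|\nu|=N,\ \nu\notin W}\nu\nu^*=0$. If some word $\nu_0$ of length $N$ were missing from $W$, multiply this relation on the left by $\nu_0^*$ and on the right by $\nu_0$. Orthogonality of distinct words of the same length then gives $1=0$ in $L$, which is a contradiction since $L\neq0$. Hence $W$ consists of all words of length $N$, and therefore $\bigcup_i Z(\gamma_i)=\Cantor$. The sets $Z(\gamma_i)$ thus partition $\Cantor$, and by the stated equivalent description $C$ is a leaf set.

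The non-vanishing facts used above (that nonempty real paths and $1$ are nonzero in $L$) follow from $\mathcal B$ being a basis, by \cite[Theorem~3.7]{LopatkinNam}.
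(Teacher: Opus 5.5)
Your proof is correct and follows the paper's argument. For the forward direction you use prefix-incomparability of the leaves together with invariance of $\sum_i\gamma_i\gamma_i^*$ under simple expansion. For the converse you get disjointness from $\gamma_i^*\gamma_j=0$, and covering by sandwiching the identity $\sum_i\gamma_i\gamma_i^*=1$ between $\nu_0^*$ and $\nu_0$ for a word $\nu_0$ incomparable with every $\gamma_i$, which yields $1=0$.

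The only difference is cosmetic. The paper takes $\mu$ to be a long prefix of an uncovered point and sandwiches the original identity directly. Your refinement to common length $N$ is therefore not needed, and neither is the subtraction step: you could simply sandwich $\sum_{\nu\in W}\nu\nu^*=1$ itself.
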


\begin{proof}
Suppose first that $C$ is a leaf set. Distinct elements of $C$ are prefix-incomparable, and hence
$
\gamma_i^*\gamma_j=\delta_{ij}1.
$
Every leaf set is obtained from $\{1\}$ by finitely many simple expansions. If a leaf $\gamma$ is replaced by $\gamma e$ and $\gamma f$, then
$$
\gamma e(\gamma e)^*+\gamma f(\gamma f)^*
=\gamma(ee^*+ff^*)\gamma^*
=\gamma\gamma^*.
$$
Thus the sum of the idempotents $\gamma\gamma^*$ is unchanged under simple expansion. It is equal to $1$ for the leaf set $\{1\}$, and therefore $\sum_{i=1}^n\gamma_i\gamma_i^*=1$.

Conversely, suppose that the displayed identities hold. The first family of identities shows that distinct elements of $C$ are prefix-incomparable, so the cylinders $Z(\gamma_i)$ are pairwise disjoint. Suppose that they do not cover $\Cantor$. Choose $\xi\in\Cantor\setminus\bigcup_iZ(\gamma_i)$, and let $\mu$ be a prefix of $\xi$ whose length is greater than every $|\gamma_i|$. Then $\mu$ is prefix-incomparable with every $\gamma_i$, and hence $\gamma_i^*\mu=0$ for every $i$. Multiplying the second displayed identity on the left by $\mu^*$ and on the right by $\mu$ we obtain
$$
1=\mu^*\mu
=\sum_{i=1}^n\mu^*\gamma_i\gamma_i^*\mu
=0,
$$
a contradiction. Hence the cylinders $Z(\gamma_i)$ partition $\Cantor$, and $C$ is a leaf set.
\end{proof}

\begin{lemma}\label{lem:extend-antichain}
Every finite antichain in the prefix order on $\Words$ is contained in a leaf set.
\end{lemma}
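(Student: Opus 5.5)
We will show that a finite antichain $\{\alpha_1,\ldots,\alpha_m\}$ in the prefix order is contained in a leaf set by constructing a finite full rooted subtree whose leaves include every $\alpha_i$. The case $m=0$ is trivial, since then the leaf set $\{1\}$ works. So assume $m\geq1$.

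Let $S$ be the set of all prefixes of the words $\alpha_1,\ldots,\alpha_m$, including $1$ and the $\alpha_i$ themselves. This set is finite, contains the root, and is closed under predecessors. It need not be full, so we enlarge it to
$$
S'=S\cup\{\beta x:\beta\in S\setminus\{\alpha_1,\ldots,\alpha_m\},\ x\in\{e,f\}\}.
$$
In words, we add both children of every vertex of $S$ that is not one of the $\alpha_i$. The set $S'$ is still finite and still closed under predecessors.

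Next we check that $S'$ is full and that every $\alpha_i$ is a leaf of it.

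\emph{Vertices of $S$ other than the $\alpha_i$.} Such a vertex has both children in $S'$ by construction.

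\emph{The words $\alpha_i$.} Suppose some child $\alpha_i x$ lay in $S'$. Since $\alpha_i x\neq\alpha_j$ would otherwise need checking, consider the two ways $\alpha_i x$ can enter $S'$.
\begin{itemize}
\item If $\alpha_i x\in S$, then $\alpha_i x$ is a prefix of some $\alpha_j$. Hence $\alpha_i$ is a proper prefix of $\alpha_j$, which contradicts the antichain hypothesis.
\item If $\alpha_i x$ was added as a child of some $\beta\in S\setminus\{\alpha_1,\ldots,\alpha_m\}$, then $\beta=\alpha_i$. But $\alpha_i$ is excluded from that set, so this cannot happen.
\end{itemize}
So each $\alpha_i$ has no children in $S'$.

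\emph{The added vertices $\beta x\notin S$.} None of their children lie in $S$, because anything in $S$ whose prefix is $\beta x$ would force $\beta x\in S$. None of their children were added either, because new vertices are only children of elements of $S$.

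Thus every vertex of $S'$ has either both children in $S'$ or neither, so $S'$ is a finite full rooted subtree. Its leaf set contains every $\alpha_i$.

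The only point needing care is the leaf check for the $\alpha_i$. This is exactly where prefix-incomparability is used: it guarantees that no $\alpha_j$ lies below an $\alpha_i$. Alternatively, one could argue through the cylinder characterization stated before Lemma~\ref{lem:leaf-set-relations}. The complement $\Cantor\setminus\bigsqcup_i Z(\alpha_i)$ is compact open, so it is a finite disjoint union of cylinders $Z(\mu)$. Adding these words $\mu$ to the antichain then gives a partition of $\Cantor$ by cylinders. The tree construction above avoids that decomposition step.
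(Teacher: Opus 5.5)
Your proof is correct, but it takes a different route from the paper.

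\textbf{The paper's route.} The paper pads the antichain $F$ with every word of a fixed length $N>\max_{\alpha\in F}|\alpha|$ that extends no element of $F$. It then checks the leaf-set property through the cylinder description: each $\xi\in\Cantor$ either lies in some $Z(\alpha)$ with $\alpha\in F$, or its length-$N$ prefix is one of the adjoined words.

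\textbf{Your route.} You build the full subtree explicitly: the prefix closure $S$, together with both children of every element of $S$ other than the $\alpha_i$. You then verify fullness directly against the tree definition, and the case analysis is complete.

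\textbf{Comparison.}
\begin{enumerate}
\item Your argument works from the primary definition of a leaf set. It therefore does not rely on the equivalence between full subtrees and cylinder partitions, which the paper states but does not prove.
\item Your construction gives the smallest full subtree containing the $\alpha_i$. Its leaf set has only $1+|S\setminus\{\alpha_1,\ldots,\alpha_m\}|$ elements, whereas the paper's can have on the order of $2^N$.
\item The paper's version is shorter and needs no case analysis. It transfers verbatim to the $d$-ary setting of Proposition~\ref{prop:d-ary-elementary}; yours transfers just as easily by adjoining all $d$ children.
\end{enumerate}

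The sentence beginning ``Since $\alpha_i x\neq\alpha_j$ would otherwise need checking'' is garbled. It is harmless, since the two bullet cases that follow cover everything.
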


\begin{proof}
The assertion is immediate for the empty antichain. Let $F\subseteq\Words$ be a nonempty finite antichain, and choose
$
N>\max\{|\alpha|:\alpha\in F\}.
$
Put
$$
C=F\cup
\{\mu\in\{e,f\}^N:
\text{ no }\alpha\in F\text{ is a prefix of }\mu\}.
$$
The elements adjoined to $F$ have the same length, and none extends an element of $F$. Thus $C$ is an antichain.

Let $\xi\in\Cantor$. If some element of $F$ is a prefix of $\xi$, then $\xi$ belongs to the corresponding cylinder. Otherwise, the prefix of $\xi$ of length $N$ belongs to $C$. Hence the cylinders determined by the elements of $C$ partition $\Cantor$, and $C$ is a leaf set containing $F$.
\end{proof}

A leaf set $D$ is an \emph{expansion} of a leaf set $C$ if $D$ is obtained from $C$ by finitely many simple expansions. We shall also say that $D$ is a \emph{refinement} of $C$. Any two leaf sets have a common refinement. Indeed, if $N$ is at least the length of every word occurring in either leaf set, then both leaf sets expand to $\{e,f\}^N$. This is the usual common-expansion procedure for Higman--Thompson groups \cite[Section~1]{Pardo}.

\begin{lemma}\label{lem:refinement-identity}
For $\mu,\nu\in\Words$ and $r\geq0$, we have
$
\mu\nu^*
=
\sum_{\delta\in\{e,f\}^r}
\mu\delta(\nu\delta)^*.
$
\end{lemma}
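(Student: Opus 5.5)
The identity will follow by induction on $r$, using only the defining relation $ee^*+ff^*=1$ of $L$ together with the convention that $(\nu\delta)^*=\delta^*\nu^*$. There is no genuine obstacle here; the only point needing care is the bookkeeping of the adjoint of a concatenation, which follows directly from the definition $\alpha^*=a_r^*\cdots a_1^*$.

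For $r=0$, the set $\{e,f\}^0$ consists of the empty word $1$. The sum therefore reduces to the single term $\mu\nu^*$, and there is nothing to prove.

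For the inductive step, assume the identity holds for $r$. Fix $\delta\in\{e,f\}^r$ and consider the term $\mu\delta(\nu\delta)^*=\mu\delta\delta^*\nu^*$. Inserting the relation $1=ee^*+ff^*$ between $\delta$ and $\delta^*$ gives
$$
\mu\delta\delta^*\nu^*=\mu\delta e e^*\delta^*\nu^*+\mu\delta f f^*\delta^*\nu^*
=\mu(\delta e)(\nu\delta e)^*+\mu(\delta f)(\nu\delta f)^*.
$$
This is exactly the computation used in the proof of Lemma~\ref{lem:leaf-set-relations} for a simple expansion. Now sum over all $\delta\in\{e,f\}^r$. Every word of length $r+1$ is uniquely of the form $\delta e$ or $\delta f$ with $|\delta|=r$, so the resulting sum is precisely the sum over $\{e,f\}^{r+1}$. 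This completes the induction.

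Alternatively, one could argue directly rather than by induction. The set $\{e,f\}^r$ is a leaf set, so Lemma~\ref{lem:leaf-set-relations} gives $\sum_{\delta\in\{e,f\}^r}\delta\delta^*=1$. Multiplying this identity on the left by $\mu$ and on the right by $\nu^*$ yields the claim immediately. I would present this one-line version and keep the induction only as a remark.
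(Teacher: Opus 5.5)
Your proposal is correct and takes essentially the same approach as the paper. The paper obtains $\sum_{\delta\in\{e,f\}^r}\delta\delta^*=1$ by iterating the Cuntz--Krieger relation, which is your induction, and then multiplies by $\mu$ on the left and $\nu^*$ on the right, exactly as in your one-line version.
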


\begin{proof}
Iterating the Cuntz--Krieger relation, we have
$
1=\sum_{\delta\in\{e,f\}^r}\delta\delta^*.
$
Multiplying on the left by $\mu$ and on the right by $\nu^*$ proves the asserted identity.
\end{proof}

Let $C=(\gamma_1,\ldots,\gamma_n)$ be an ordered leaf set, and put $\varepsilon_{ij}^{C}=\gamma_i\gamma_j^*$ where $1\leq i,j\leq n$. Lemma~\ref{lem:leaf-set-relations} implies that
$$
\varepsilon_{ij}^{C}\varepsilon_{k\ell}^{C}
=\delta_{jk}\varepsilon_{i\ell}^{C}
\qquad\text{and}\qquad
\sum_{i=1}^n\varepsilon_{ii}^{C}=1.
$$
Thus $\{\varepsilon_{ij}^{C}\}$ is a complete system of matrix units in $L$. The paths $\gamma_1,\ldots,\gamma_n$ determine the following explicit isomorphism $M_n(L)\cong L$; compare \cite[Theorem~4.14]{AbramsAnhPardo}.

\begin{proposition}\label{prop:theta-isomorphism}
For every ordered leaf set $C=(\gamma_1,\ldots,\gamma_n)$, the map
$$
\Theta_C:M_n(L)\longrightarrow L,
\qquad
\Theta_C((a_{ij}))
=\sum_{i,j=1}^n\gamma_i a_{ij}\gamma_j^*,
$$
is a unital $K$-algebra isomorphism. Its inverse is
$
\Theta_C^{-1}(x)
=
(\gamma_i^*x\gamma_j)_{1\leq i,j\leq n}.
$
\end{proposition}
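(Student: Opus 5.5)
The plan is to verify directly that $\Theta_C$ is $K$-linear, unital and multiplicative, and that the displayed map $\Psi(x)=(\gamma_i^*x\gamma_j)_{i,j}$ is a two-sided inverse. Every step reduces to the relations of Lemma~\ref{lem:leaf-set-relations}, namely $\gamma_i^*\gamma_j=\delta_{ij}1$ and $\sum_i\gamma_i\gamma_i^*=1$, so no further structure of $L$ is needed.

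First, $K$-linearity of $\Theta_C$ is clear from the formula. For unitality, $\Theta_C(I_n)=\sum_i\gamma_i\gamma_i^*=1$ by the second relation. For multiplicativity, take $A=(a_{ij})$ and $B=(b_{k\ell})$ and expand
$$
\Theta_C(A)\Theta_C(B)=\sum_{i,j,k,\ell}\gamma_i a_{ij}\gamma_j^*\gamma_k b_{k\ell}\gamma_\ell^*.
$$
Applying $\gamma_j^*\gamma_k=\delta_{jk}1$ collapses this to $\sum_{i,\ell}\gamma_i\bigl(\sum_j a_{ij}b_{j\ell}\bigr)\gamma_\ell^*$, which is $\Theta_C(AB)$. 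Hence $\Theta_C$ is a unital $K$-algebra homomorphism.

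Next, we check that $\Psi$ is inverse to $\Theta_C$ on both sides.
\begin{itemize}
\item For $\Psi\circ\Theta_C=\mathrm{id}$: the $(k,\ell)$ entry of $\Psi(\Theta_C(A))$ is $\sum_{i,j}\gamma_k^*\gamma_i a_{ij}\gamma_j^*\gamma_\ell=a_{k\ell}$, again by $\gamma_k^*\gamma_i=\delta_{ki}$ and $\gamma_j^*\gamma_\ell=\delta_{j\ell}$.
\item For $\Theta_C\circ\Psi=\mathrm{id}$: we have $\Theta_C(\Psi(x))=\sum_{i,j}\gamma_i\gamma_i^*x\gamma_j\gamma_j^*=\bigl(\sum_i\gamma_i\gamma_i^*\bigr)x\bigl(\sum_j\gamma_j\gamma_j^*\bigr)=x$, using the completeness relation on each side.
\end{itemize}
Thus $\Theta_C$ is bijective with inverse $\Psi$, so it is an isomorphism.

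There is no real obstacle here. The only point needing care is noticing that injectivity and surjectivity use different halves of Lemma~\ref{lem:leaf-set-relations}. The orthogonality relations give $\Psi\circ\Theta_C=\mathrm{id}$, and the completeness relation gives $\Theta_C\circ\Psi=\mathrm{id}$. Both halves are available because $C$ is a leaf set.
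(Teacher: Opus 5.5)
Your proposal is correct and follows essentially the same route as the paper: multiplicativity and the left-inverse identity from the orthogonality relations $\gamma_i^*\gamma_j=\delta_{ij}1$, and unitality and the right-inverse identity from the completeness relation $\sum_i\gamma_i\gamma_i^*=1$. Your remark that the two halves of Lemma~\ref{lem:leaf-set-relations} account for injectivity and surjectivity respectively matches how the paper's argument is organized.
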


\begin{proof}
Let $X=(x_{ij})$ and $Y=(y_{ij})$ belong to $M_n(L)$. Using $\gamma_j^*\gamma_k=\delta_{jk}1$, we obtain
$$
\Theta_C(X)\Theta_C(Y)
=
\sum_{i,j,k,\ell}
\gamma_i x_{ij}(\gamma_j^*\gamma_k)y_{k\ell}\gamma_\ell^*
=
\sum_{i,j,\ell}
\gamma_i x_{ij}y_{j\ell}\gamma_\ell^*
=\Theta_C(XY).
$$
Moreover,
$
\Theta_C(I_n)
=\sum_{i=1}^n\gamma_i\gamma_i^*
=1.
$
For $1\leq r,s\leq n$, we have
$
\gamma_r^*\Theta_C(X)\gamma_s=x_{rs},
$
so the displayed formula for $\Theta_C^{-1}$ is a left inverse. Conversely, for $x\in L$, we also have
$$
\Theta_C\bigl((\gamma_i^*x\gamma_j)_{i,j}\bigr)
=
\sum_{i,j=1}^n
\gamma_i\gamma_i^*x\gamma_j\gamma_j^*
=
\left(\sum_{i=1}^n\gamma_i\gamma_i^*\right)
x
\left(\sum_{j=1}^n\gamma_j\gamma_j^*\right)
=x.
$$
Hence $\Theta_C$ and the displayed map are mutually inverse.
\end{proof}

\section{Leaf-matrix presentations and elementary transvections}\label{sec:leaf-matrix}

Let $A=(\alpha_1,\ldots,\alpha_n)$ and $B=(\beta_1,\ldots,\beta_n)$ be ordered leaf sets of the same cardinality. For $M=(m_{ij})\in M_n(K)$, put
$$
u(A,M,B)=\sum_{i,j=1}^n m_{ij}\alpha_i\beta_j^*.
$$
We refer to $A$ and $B$ as the range and domain leaf sets, respectively.

\begin{lemma}\label{lem:leaf-matrix-multiplication}
If $A,B,C$ are ordered leaf sets of cardinality $n$ and $M,N\in M_n(K)$, then
$$
u(A,M,B)u(B,N,C)=u(A,MN,C).
$$
\end{lemma}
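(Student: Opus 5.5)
The identity follows by expanding the product bilinearly and using only the orthogonality half of Lemma~\ref{lem:leaf-set-relations}, applied to the middle leaf set $B$.

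Write
$$
u(A,M,B)u(B,N,C)=\sum_{i,j,k,\ell=1}^n m_{ij}n_{k\ell}\,\alpha_i\beta_j^*\beta_k\gamma_\ell^*,
$$
where $C=(\gamma_1,\ldots,\gamma_n)$. The scalars $m_{ij},n_{k\ell}\in K$ are central in $L$, so they can be pulled to the front of each term.

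Since $B$ is a leaf set, Lemma~\ref{lem:leaf-set-relations} gives $\beta_j^*\beta_k=\delta_{jk}1$. The quadruple sum therefore collapses to
$$
\sum_{i,j,\ell=1}^n m_{ij}n_{j\ell}\,\alpha_i\gamma_\ell^*
=\sum_{i,\ell=1}^n (MN)_{i\ell}\,\alpha_i\gamma_\ell^*
=u(A,MN,C).
$$

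Equivalently, with the vectors $\alpha=(\alpha_1,\ldots,\alpha_n)$ and $\beta=(\beta_1,\ldots,\beta_n)$, we have $u(A,M,B)=\alpha M\beta^*$. The identity then reads $(\alpha M\beta^*)(\beta N\gamma^*)=\alpha MN\gamma^*$, which holds because $\beta^*\beta=I_n$.

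No step is a real obstacle. The only points to check are that the scalars commute past the paths, and that the leaf-set hypothesis on $A$ and $C$ is not needed.
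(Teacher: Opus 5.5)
Your proof is correct and is essentially the paper's argument: expand the product, pull the central scalars to the front, and collapse the quadruple sum using $\beta_j^*\beta_k=\delta_{jk}1$ for the middle leaf set $B$. Your remark that only the orthogonality relations for $B$ are used, and not the leaf-set property of $A$ or $C$, is also accurate.
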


\begin{proof}
Write $C=(\gamma_1,\ldots,\gamma_n)$. Since $\beta_j^*\beta_k=\delta_{jk}1$,
$$
u(A,M,B)u(B,N,C)
=\sum_{i,j,k,\ell}
m_{ij}n_{k\ell}\alpha_i(\beta_j^*\beta_k)\gamma_\ell^*
=\sum_{i,\ell}(MN)_{i\ell}\alpha_i\gamma_\ell^*
=u(A,MN,C).
$$
\end{proof}

\begin{proposition}\label{prop:leaf-matrix-inverse}
If $M\in\GL_n(K)$, then $u(A,M,B)$ is invertible, with
$$
u(A,M,B)^{-1}=u(B,M^{-1},A).
$$
\end{proposition}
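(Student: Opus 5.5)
The plan is to compute both products $u(A,M,B)\,u(B,M^{-1},A)$ and $u(B,M^{-1},A)\,u(A,M,B)$ with Lemma~\ref{lem:leaf-matrix-multiplication}, and then show that each equals $1$.

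For the first product, apply Lemma~\ref{lem:leaf-matrix-multiplication} with the triple of ordered leaf sets $(A,B,A)$ and the matrices $M$ and $M^{-1}$:
$$
u(A,M,B)\,u(B,M^{-1},A)=u(A,MM^{-1},A)=u(A,I_n,A).
$$
For the second product, apply the same lemma with the triple $(B,A,B)$ and the matrices $M^{-1}$ and $M$:
$$
u(B,M^{-1},A)\,u(A,M,B)=u(B,I_n,B).
$$
The lemma only needs the middle leaf set to satisfy $\beta_j^*\beta_k=\delta_{jk}1$, which Lemma~\ref{lem:leaf-set-relations} guarantees for any leaf set. So both applications are legitimate, and all three leaf sets have the same cardinality $n$.

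It remains to check that $u(C,I_n,C)=1$ for any ordered leaf set $C=(\gamma_1,\ldots,\gamma_n)$. By definition,
$$
u(C,I_n,C)=\sum_{i=1}^n\gamma_i\gamma_i^*,
$$
and this sum equals $1$ by the second identity of Lemma~\ref{lem:leaf-set-relations}. Equivalently, $u(C,I_n,C)=\Theta_C(I_n)=1$ by Proposition~\ref{prop:theta-isomorphism}.

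Hence $u(B,M^{-1},A)$ is a two-sided inverse of $u(A,M,B)$. The only point that needs care is that both orders of multiplication must be checked. In $L$ one-sided inverses need not be two-sided, as $e^*e=1$ while $ee^*\neq1$ shows. Here each order reduces to $\sum\gamma_i\gamma_i^*=1$, for $C=A$ and for $C=B$ respectively, so the computation is routine.
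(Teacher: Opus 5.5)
Your proof is correct and matches the paper's argument: both products reduce via Lemma~\ref{lem:leaf-matrix-multiplication} to $u(A,I_n,A)$ and $u(B,I_n,B)$, and each equals $1$ by $\sum_i\gamma_i\gamma_i^*=1$. The paper treats the reverse product simply by interchanging $A$ and $B$, which is the second computation you write out.
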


\begin{proof}
Lemma~\ref{lem:leaf-matrix-multiplication} shows that
$u(A,M,B)u(B,M^{-1},A)=u(A,I_n,A)=1$. The reverse product follows by interchanging $A$ and $B$.
\end{proof}

\begin{definition}
A \emph{leaf-matrix presentation} of a unit $u\in L^\times$ is an expression $u=u(A,M,B)$ in which $A$ and $B$ are ordered leaf sets of the same cardinality $n$ and $M\in\GL_n(K)$. Let $\LM_2(K)$ denote the set of units admitting a leaf-matrix presentation, and put $\LGLtwo=\langle\LM_2(K)\rangle\leq L^\times$.
\end{definition}

We next compare this definition with Freeland's Leavitt general linear group. For a fixed ordered leaf set $C$, the elements $u(C,M,C)$, with $M\in\GL_n(K)$, form the copy of $\GL_n(K)$ used in \cite[Definition~4.3.9]{Freeland}. Thus the group generated by Freeland's copies of $\GL_n(K)$ is contained in $\LGLtwo$.

Conversely, every unit in $\LM_2(K)$ has the factorization $u(A,M,B)=u(A,M,A)u(A,I_n,B)$. The first factor belongs to the copy of $\GL_n(K)$ associated with $A$. The second is the element of Thompson's group $V$ represented by the table $\beta_i\xi\mapsto\alpha_i\xi$. This is the standard realization of $V$ inside the unit group of a Leavitt algebra; see \cite[Section~3]{Pardo}.

The group $V$ is generated by transpositions of pairs of prefix-incomparable words \cite[p.~44]{Freeland}. Extend such a pair to an ordered leaf set $C$. The corresponding transposition has the form $u(C,P,C)$, where $P$ is a permutation matrix. Hence $V$ is contained in the group generated by Freeland's copies of $\GL_n(K)$. It follows that $\LGLtwo$ is precisely Freeland's Leavitt general linear group \cite[Definition~4.3.10]{Freeland}. Freeland works over a finite field; the same definition and the preceding identification apply over an arbitrary field.

Under the matrix-ring isomorphisms of Proposition~\ref{prop:theta-isomorphism}, elementary matrices take the following form.

\begin{proposition}\label{prop:scalar-leaf-transvection}
Let $\alpha,\beta\in\Words$ be prefix-incomparable and let $\lambda\in K$. Then $1+\lambda\alpha\beta^*$ belongs to $\LM_2(K)$.
\end{proposition}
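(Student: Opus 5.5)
Since $\alpha$ and $\beta$ are prefix-incomparable, $\{\alpha,\beta\}$ is a finite antichain in the prefix order. By Lemma~\ref{lem:extend-antichain} it lies in a leaf set, which we order as $C=(\gamma_1,\ldots,\gamma_n)$ with $\gamma_1=\alpha$ and $\gamma_2=\beta$. Here $n\geq2$, because $\alpha\neq\beta$. Let $M=I_n+\lambda E_{12}\in M_n(K)$, where $E_{12}$ is the ordinary matrix unit. Then $M$ is upper unitriangular, so $M\in\GL_n(K)$ with $M^{-1}=I_n-\lambda E_{12}$.

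We compute $u(C,M,C)$ directly from the definition:
$$
u(C,M,C)=\sum_{i=1}^n\gamma_i\gamma_i^*+\lambda\gamma_1\gamma_2^*
=1+\lambda\alpha\beta^*,
$$
using $\sum_i\gamma_i\gamma_i^*=1$ from Lemma~\ref{lem:leaf-set-relations}. Hence $1+\lambda\alpha\beta^*$ has the leaf-matrix presentation $u(C,M,C)$, and so it lies in $\LM_2(K)$. Proposition~\ref{prop:leaf-matrix-inverse} also confirms invertibility, with inverse $u(C,M^{-1},C)=1-\lambda\alpha\beta^*$. Equivalently, the element is $\Theta_C$ of the elementary matrix $I_n+\lambda E_{12}$.

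There is no real obstacle here. The only point needing care is that prefix-incomparability is exactly what makes $\{\alpha,\beta\}$ an antichain of two distinct words, so that both words occur as distinct leaves of the same leaf set. Without it, $\alpha$ and $\beta$ could not both be leaves, and the off-diagonal matrix unit $\alpha\beta^*$ would not be available inside a single system $\{\varepsilon^C_{ij}\}$.
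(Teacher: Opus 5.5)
Your proposal is correct and follows the paper's proof exactly. You extend $\{\alpha,\beta\}$ to an ordered leaf set $C=(\alpha,\beta,\gamma_3,\ldots,\gamma_n)$ via Lemma~\ref{lem:extend-antichain}, and then observe that $u(C,I_n+\lambda E_{12},C)=1+\lambda\alpha\beta^*$.
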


\begin{proof}
By Lemma~\ref{lem:extend-antichain}, choose an ordered leaf set $C=(\alpha,\beta,\gamma_3,\ldots,\gamma_n)$. Then
$u(C,I_n+\lambda E_{12},C)=1+\lambda\alpha\beta^*$.
\end{proof}

\begin{lemma}\label{lem:orthogonal-sum}
Let $\alpha_1,\ldots,\alpha_m,\beta_1,\ldots,\beta_m\in\Words$ and $\lambda_1,\ldots,\lambda_m\in K$. If $\beta_s^*\alpha_r=0$ for all $1\leq r,s\leq m$, then $1+\sum_{r=1}^m\lambda_r\alpha_r\beta_r^*$ belongs to $\LGLtwo$.
\end{lemma}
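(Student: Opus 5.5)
The hypothesis $\beta_s^*\alpha_r=0$ for all $r,s$ makes the summands $x_r=\lambda_r\alpha_r\beta_r^*$ pairwise square-zero in every product order. Indeed, $x_rx_s=\lambda_r\lambda_s\alpha_r(\beta_r^*\alpha_s)\beta_s^*=0$ for all $r,s$, including $r=s$. Hence
$$
\prod_{r=1}^m\bigl(1+\lambda_r\alpha_r\beta_r^*\bigr)=1+\sum_{r=1}^m\lambda_r\alpha_r\beta_r^*,
$$
because every cross term in the expansion of the product contains some $x_rx_s$ and so vanishes. It therefore suffices to show that each single factor $1+\lambda\alpha\beta^*$ lies in $\LGLtwo$, where $\alpha,\beta$ are words with $\beta^*\alpha=0$.

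By the table for $\alpha^*\beta$ in Section~\ref{sec:leaf-sets}, the condition $\beta^*\alpha=0$ holds exactly when $\alpha$ and $\beta$ are prefix-incomparable. The other three cases give $1$, $\gamma$ or $\gamma^*$, and these are nonzero because $\mathcal B$ is a basis of $L$. Proposition~\ref{prop:scalar-leaf-transvection} then gives $1+\lambda\alpha\beta^*\in\LM_2(K)\subseteq\LGLtwo$, and the lemma follows by multiplying these factors in $\LGLtwo$.

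The argument needs only one check: the product identity, that is, that all products $x_rx_s$ vanish, including the diagonal terms $r=s$. The hypothesis covers $r=s$ as well, so this holds. An alternative would be to build a single leaf set containing all $\alpha_r,\beta_r$ and realize the element as one $u(C,I+N,C)$. This fails when the $\alpha_r$ are not mutually prefix-incomparable, so the factorization above is the better route.
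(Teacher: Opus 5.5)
Your proof is correct and follows essentially the same argument as the paper. The case $r=s$ of the hypothesis gives prefix-incomparability, so each factor lies in $\LM_2(K)$ by Proposition~\ref{prop:scalar-leaf-transvection}, and the vanishing of every product $x_rx_s$ collapses the product of the factors to the stated sum. You also justify, via the nonvanishing of $1$, $\gamma$ and $\gamma^*$, that $\beta^*\alpha=0$ forces prefix-incomparability, a step the paper leaves implicit.
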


\begin{proof}
Put $t_r=\lambda_r\alpha_r\beta_r^*$. Taking $s=r$ in the hypothesis shows that $\alpha_r$ and $\beta_r$ are prefix-incomparable. Hence $1+t_r\in\LM_2(K)$ by Proposition~\ref{prop:scalar-leaf-transvection}. Moreover, $t_rt_s=0$ for all $r,s$. Therefore
$$
\prod_{r=1}^m(1+t_r)=1+\sum_{r=1}^m t_r\in\LGLtwo.
$$
\end{proof}

\begin{theorem}\label{thm:corner-transvections}
Let $\rho,\sigma\in\Words$ be prefix-incomparable and let $a\in L$. Then $1+\rho a\sigma^*\in\LGLtwo$. Moreover, $(\rho a\sigma^*)^2=0$, and hence $(1+\rho a\sigma^*)^{-1}=1-\rho a\sigma^*$.
\end{theorem}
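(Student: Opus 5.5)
The plan is to reduce to Lemma~\ref{lem:orthogonal-sum} by expanding $a$ in monomials. The algebra $L$ is spanned over $K$ by the monomials $\mu\nu^*$ with $\mu,\nu\in\Words$; for instance, the basis $\mathcal B$ consists of such elements, with $1=1\cdot1^*$, real paths $\mu=\mu 1^*$, and ghost paths $\nu^*=1\nu^*$. So I will write
$$
a=\sum_{r=1}^m\lambda_r\mu_r\nu_r^*
$$
with $\lambda_r\in K$ and $\mu_r,\nu_r\in\Words$. Then
$$
\rho a\sigma^*=\sum_{r=1}^m\lambda_r(\rho\mu_r)(\sigma\nu_r)^*.
$$
I put $\alpha_r=\rho\mu_r$ and $\beta_r=\sigma\nu_r$, so that $1+\rho a\sigma^*=1+\sum_r\lambda_r\alpha_r\beta_r^*$.

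The key step is to verify the hypothesis of Lemma~\ref{lem:orthogonal-sum}, namely $\beta_s^*\alpha_r=0$ for all $r,s$. By the multiplication rule for $\alpha^*\beta$, it suffices to show that $\sigma\nu_s$ and $\rho\mu_r$ are prefix-incomparable. Since $\rho$ and $\sigma$ are prefix-incomparable, there is a position $k\leq\min(|\rho|,|\sigma|)$ at which $\rho$ and $\sigma$ first differ. The words $\rho\mu_r$ and $\sigma\nu_s$ agree with $\rho$ and $\sigma$ up to that position, so they also differ at position $k$, and neither is a prefix of the other. Equivalently, $\beta_s^*\alpha_r=\nu_s^*(\sigma^*\rho)\mu_r=0$ because $\sigma^*\rho=0$. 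Lemma~\ref{lem:orthogonal-sum} then gives $1+\rho a\sigma^*\in\LGLtwo$.

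For the second assertion, I compute
$$
(\rho a\sigma^*)^2=\rho a(\sigma^*\rho)a\sigma^*=0,
$$
using $\sigma^*\rho=0$ from the prefix-incomparability of $\rho$ and $\sigma$. Hence
$$
(1+\rho a\sigma^*)(1-\rho a\sigma^*)=1-(\rho a\sigma^*)^2=1,
$$
and the same holds for the product in the other order.

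I expect no serious obstacle here. The only point needing care is the spanning claim that every $a\in L$ is a $K$-combination of monomials $\mu\nu^*$ with genuine paths $\mu,\nu\in\Words$, including the degenerate cases where one or both paths are empty, so that Lemma~\ref{lem:orthogonal-sum} applies verbatim. This follows from the basis $\mathcal B$ described in Section~\ref{sec:leaf-sets}.
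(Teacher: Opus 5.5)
Your proposal is correct and follows the paper's proof essentially step for step. You expand $a$ in monomials $\mu_r\nu_r^*$ and use prefix-incomparability (equivalently $\sigma^*\rho=0$) to get $(\sigma\nu_s)^*(\rho\mu_r)=0$, which puts you in the setting of Lemma~\ref{lem:orthogonal-sum}; the same relation $\sigma^*\rho=0$ gives $(\rho a\sigma^*)^2=0$.
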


\begin{proof}
Write $a=\sum_{r=1}^m c_r\mu_r\nu_r^*$, where $c_r\in K$ and $\mu_r,\nu_r\in\Words$. Then
$$
\rho a\sigma^*
=\sum_{r=1}^m c_r(\rho\mu_r)(\sigma\nu_r)^*.
$$
Since $\rho$ and $\sigma$ are prefix-incomparable, the words $\rho\mu_r$ and $\sigma\nu_s$ are prefix-incomparable for every $r,s$. Hence
$(\sigma\nu_s)^*(\rho\mu_r)=0$, and Lemma~\ref{lem:orthogonal-sum} implies that $1+\rho a\sigma^*\in\LGLtwo$. Finally, $\sigma^*\rho=0$, so $(\rho a\sigma^*)^2=0$, which proves the asserted inverse.
\end{proof}

The set $\LM_2(K)$ is not closed under multiplication.

\begin{proposition}\label{prop:single-leaf-not-exhaustive}
The element $x=1+(e+e^2)f^*$ is a unit of $L$ but does not belong to $\LM_2(K)$. Nevertheless,
$x=(1+e^2f^*)(1+ef^*)$, so $x\in\LGLtwo$.
\end{proposition}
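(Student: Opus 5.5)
The statement has three parts: $x$ is a unit, $x$ factors as displayed, and $x$ has no leaf-matrix presentation. The first two are quick. Since $e$ and $f$ are prefix-incomparable, $(e+e^2)f^*$ has square zero, because $f^*e=0$ and $f^*e^2=0$. Hence $x^{-1}=1-(e+e^2)f^*$. For the factorization, $e^2f^*\cdot ef^*=e^2(f^*e)f^*=0$, so
$$
(1+e^2f^*)(1+ef^*)=1+e^2f^*+ef^*=x.
$$
Each factor lies in $\LM_2(K)$ by Proposition~\ref{prop:scalar-leaf-transvection}: the pair $e^2,f$ is prefix-incomparable, and so is $e,f$. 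Hence $x\in\LGLtwo$.

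The real content is showing $x\notin\LM_2(K)$. Suppose $x=u(A,M,B)$ with $A=(\alpha_i)$, $B=(\beta_j)$. I would first refine to a common leaf set. Applying Lemma~\ref{lem:refinement-identity} to each $\alpha_i\beta_j^*$ and expanding a leaf $\beta_j$ to $\beta_je,\beta_jf$ simultaneously with $\alpha_i$ replaces $M$ by a larger matrix, still invertible: it is essentially $M$ tensored with $I_2$ after reordering. So I may assume $B$ refines a fixed deep level, say all $|\beta_j|\ge 3$. The key invariant is then the following. For each domain leaf $\beta_j$, the element $x\beta_j=\sum_i m_{ij}\alpha_i$ is a $K$-combination of the range words $\alpha_i$, which form an antichain.

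I would compute $x\beta$ directly for words $\beta$ with $|\beta|\ge 3$. If $\beta=f\gamma$, then $x\beta=f\gamma+e\gamma+e^2\gamma$. If $\beta$ begins with $e$, then $x\beta=\beta$. Now choose the domain leaves $f\gamma$ and compare supports. The words $e\gamma$ and $e^2\gamma$ must both be range leaves in $A$, up to the expansion bookkeeping. Taking $\gamma=e\delta$ makes $e\gamma=e^2\delta$ a prefix of $e^2\gamma=e^2e\delta$, which cannot happen in an antichain.

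The obstacle is making that comparison rigorous. A nonzero linear combination of words might be rewritten as a combination of the $\alpha_i$ in a way that hides its support. I would handle this with the basis $\mathcal B$: distinct real paths are linearly independent. So if $\sum_i m_{ij}\alpha_i=f\gamma+e\gamma+e^2\gamma$, with the $\alpha_i$ distinct real paths, then $\{\alpha_i:m_{ij}\ne0\}$ equals $\{f\gamma,e\gamma,e^2\gamma\}$ exactly. One needs $f\gamma$, $e\gamma$ and $e^2\gamma$ to be pairwise distinct, which holds. Then $e\gamma$ and $e^2\gamma$ both lie in the antichain $A$. Choosing $\gamma$ with first letter $e$, possible because $B$ covers $Z(fe)$, gives a prefix relation between them and hence a contradiction. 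So I would pick any domain leaf $\beta_j\in B$ with $\beta_j\preccurlyeq fe\cdots$, that is, $\beta_j=fe\delta$, which exists after refinement.
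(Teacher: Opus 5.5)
Your proofs that $x$ is a unit and that $x=(1+e^2f^*)(1+ef^*)\in\LGLtwo$ are correct. The reduction to $x\beta_j=\sum_i m_{ij}\alpha_i$ is also correct, as is the use of linear independence of real paths. The refinement is legitimate too, provided you expand every leaf of $A$ and of $B$ at once, which is what $M\otimes I_2$ describes.

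The gap is the final prefix claim. You assert that for $\gamma=e\delta$ the word $e\gamma=e^2\delta$ is a prefix of $e^2\gamma=e^3\delta$. Cancelling the leading $e$, the relation $e\gamma\preccurlyeq e^2\gamma$ is equivalent to $\gamma\preccurlyeq e\gamma$. Comparing letters one at a time shows this holds only when $\gamma$ is a power of $e$. For example, the domain leaf $\beta_j=fef$ gives $x\beta_j=fef+e^2f+e^3f$. These three words are pairwise prefix-incomparable, so this column of $M$ produces no contradiction. Knowing that $\gamma$ begins with $e$, or that $B$ covers $Z(fe)$, is therefore not enough.

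The repair is to choose the domain leaf on the ray $feee\cdots$. Let $\beta_j$ be the unique element of $B$ with $feee\cdots\in Z(\beta_j)$. If $\beta_j\neq1$, then $\beta_j=fe^r$ for some $r\geq0$. The identity $x\beta_j=fe^r+e^{r+1}+e^{r+2}$ then forces the comparable words $e^{r+1}\preccurlyeq e^{r+2}$ into the antichain $A$. This is exactly the paper's argument.

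With this choice the refinement to depth three is unnecessary; you only need to exclude $\beta_j=1$. The paper does this directly: $B=\{1\}$ forces $A=\{1\}$ and makes $x$ a scalar, contradicting $xf=f+e+e^2$.
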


\begin{proof}
Put $t=(e+e^2)f^*$. Since $f^*e=f^*e^2=0$, we have $t^2=0$ and hence $x^{-1}=1-t$.

Suppose that $x=u(A,M,B)$, where $A$ and $B$ are ordered leaf sets of cardinality $n$ and $M\in\GL_n(K)$. The infinite word $feee\cdots$ belongs to a unique cylinder $Z(\beta_j)$ with $\beta_j\in B$. If $\beta_j=1$, then $B=\{1\}$ and $n=1$. The only leaf set of cardinality one is $\{1\}$, so $A=\{1\}$ and $x$ is a scalar. This is impossible because $xf=f+e+e^2$ and the real paths are linearly independent.

It follows that $\beta_j=fe^r$ for some $r\geq0$. Multiplying on the right by $\beta_j$, we find
$$
\sum_{i=1}^n m_{ij}\alpha_i
=x\beta_j
=fe^r+e^{r+1}+e^{r+2}.
$$
The real paths are linearly independent. Hence both $e^{r+1}$ and $e^{r+2}$ occur among the elements of $A$, contrary to the fact that a leaf set is an antichain in the prefix order. Therefore $x\notin\LM_2(K)$.

By Proposition~\ref{prop:scalar-leaf-transvection}, both $1+e^2f^*$ and $1+ef^*$ belong to $\LM_2(K)$. Since $f^*e=0$, their product is $x$. Thus $\LM_2(K)$ is not closed under multiplication, while $x\in\LGLtwo$.
\end{proof}

\begin{corollary}\label{cor:elementary-image}
Let $C=(\gamma_1,\ldots,\gamma_n)$ be an ordered leaf set with $n\geq2$. Then $\Theta_C(E_n(L))\leq\LGLtwo$. More precisely, if $i\neq j$ and $a\in L$, then
$\Theta_C(I_n+aE_{ij})=1+\gamma_i a\gamma_j^*$.
\end{corollary}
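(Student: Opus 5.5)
The plan is to compute $\Theta_C(I_n+aE_{ij})$ directly from the formula in Proposition~\ref{prop:theta-isomorphism}, and then obtain the inclusion from Theorem~\ref{thm:corner-transvections}. Since $\Theta_C$ is additive and unital,
$$
\Theta_C(I_n+aE_{ij})=\Theta_C(I_n)+\Theta_C(aE_{ij})=1+\gamma_i a\gamma_j^*.
$$
The first summand equals $1$ by Proposition~\ref{prop:theta-isomorphism}. The matrix $aE_{ij}$ has only the $(i,j)$ entry nonzero, so the defining formula gives exactly one term, $\gamma_i a\gamma_j^*$. This proves the displayed formula.

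Next, since $C$ is a leaf set and $i\neq j$, the words $\gamma_i$ and $\gamma_j$ are distinct elements of a leaf set. By Lemma~\ref{lem:leaf-set-relations}, $\gamma_i^*\gamma_j=0$; equivalently, distinct leaves are prefix-incomparable. Theorem~\ref{thm:corner-transvections}, applied with $\rho=\gamma_i$ and $\sigma=\gamma_j$, then gives $1+\gamma_i a\gamma_j^*\in\LGLtwo$.

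Finally, $E_n(L)$ is by definition generated by the elementary matrices $I_n+aE_{ij}$ with $i\neq j$ and $a\in L$. The restriction of $\Theta_C$ to $\GL_n(L)$ is a group homomorphism into $L^\times$, because $\Theta_C$ is a unital ring isomorphism. Hence $\Theta_C(E_n(L))$ is generated by the images of these generators, and each image lies in the subgroup $\LGLtwo$. Therefore $\Theta_C(E_n(L))\leq\LGLtwo$.

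There is no substantial obstacle; the only point to check is the prefix-incomparability of distinct leaves. The hypothesis $n\geq2$ guarantees that elementary matrices exist, so the statement is not vacuous.
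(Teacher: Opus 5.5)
Your proposal is correct and follows the same approach as the paper. You compute $\Theta_C(I_n+aE_{ij})=1+\gamma_i a\gamma_j^*$ from the definition, use that distinct leaves are prefix-incomparable to apply Theorem~\ref{thm:corner-transvections}, and conclude because the elementary matrices generate $E_n(L)$.
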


\begin{proof}
By definition,
$$
\Theta_C(I_n+aE_{ij})=1+\gamma_i a\gamma_j^*.
$$
The leaves $\gamma_i$ and $\gamma_j$ are prefix-incomparable, so this element belongs to $\LGLtwo$ by Theorem~\ref{thm:corner-transvections}. The elementary matrices generate $E_n(L)$, which proves the assertion.
\end{proof}

\section{The full unit group}\label{sec:full-unit-group}

We now determine the group $\LGLtwo$. For a unital ring $R$, let $E_n(R)$ denote the subgroup of $\GL_n(R)$ generated by the elementary matrices $x_{ij}(r)=I_n+rE_{ij}$, where $i\neq j$ and $r\in R$, and let $D_n(R)$ denote the subgroup of invertible diagonal matrices. The ring $R$ is a \emph{$\mathrm{GE}$-ring} if $\GL_n(R)=\langle E_n(R),D_n(R)\rangle$ for every $n\geq2$.

\begin{lemma}\label{lem:GE-reduction}
Let $R$ be a unital $\mathrm{GE}$-ring and let $n\geq2$.
\begin{enumerate}[label=\textup{(\roman*)}]
\item If $c\in[R^\times,R^\times]$, then $\diag(c,1,\ldots,1)\in E_n(R)$.
\item Let $F$ be a central subfield of $R$. If the homomorphism $F^\times\to R^\times_{\mathrm{ab}}$ induced by inclusion is surjective, then $\GL_n(R)=E_n(R)D_n(F)$.
\item If $R^\times$ is perfect, then $\GL_n(R)=E_n(R)$.
\end{enumerate}
\end{lemma}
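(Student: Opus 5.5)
The plan is to use Whitehead's lemma for (i), then reduce (ii) and (iii) to (i) via the GE property.

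For (i), write $c=\prod_k [a_k,b_k]$ with $a_k,b_k\in R^\times$. Since $\diag(c,1,\ldots,1)$ is multiplicative in $c$, it suffices to treat a single commutator $c=aba^{-1}b^{-1}$. We have
$$
\diag(aba^{-1}b^{-1},1)=\diag(a,a^{-1})\,\diag(b,b^{-1})\,\diag\bigl((ba)^{-1},ba\bigr).
$$
Each factor $\diag(u,u^{-1})$, with $u\in R^\times$, lies in $E_2(R)$ by the standard identity
$$
\diag(u,u^{-1})=x_{12}(u)\,x_{21}(-u^{-1})\,x_{12}(u)\,x_{12}(-1)\,x_{21}(1)\,x_{12}(-1),
$$
which we verify directly. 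Embedding in the upper-left $2\times2$ block then gives $\diag(c,1,\ldots,1)\in E_n(R)$ for every $n\geq2$.

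For (ii), we first note that $D_n(R)$ normalizes $E_n(R)$, because
$$
d\,x_{ij}(r)\,d^{-1}=x_{ij}(d_i r d_j^{-1}).
$$
The GE hypothesis therefore gives $\GL_n(R)=E_n(R)D_n(R)$. Next we reduce a diagonal matrix $\diag(u_1,\ldots,u_n)$ modulo $E_n(R)$. Write $u_i=\lambda_i c_i$ with $\lambda_i\in F^\times$ and $c_i\in[R^\times,R^\times]$; this is possible by the surjectivity hypothesis. Because $F$ is central, we can factor
$$
\diag(u_1,\ldots,u_n)=\diag(\lambda_1,\ldots,\lambda_n)\,\diag(c_1,\ldots,c_n).
$$
Each $\diag(1,\ldots,c_i,\ldots,1)$ lies in $E_n(R)$. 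To see this, conjugate the matrix from (i) by a permutation-type element of $E_n(R)$, such as $x_{ij}(1)x_{ji}(-1)x_{ij}(1)$, which is a signed permutation; conjugating a diagonal matrix by it only moves the entries, and the signs cancel. Hence
$$
\GL_n(R)=E_n(R)D_n(F)E_n(R)=E_n(R)D_n(F),
$$
using normality once more.

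For (iii), if $R^\times$ is perfect then every unit is a product of commutators. The argument of (ii) then applies with all $\lambda_i=1$, without needing any field, and shows $D_n(R)\leq E_n(R)$, so $\GL_n(R)=E_n(R)$.

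The main obstacle is bookkeeping rather than conceptual. We must check the $2\times2$ Whitehead identity carefully over a noncommutative ring, and make sure the normality of $E_n(R)$ under $D_n(R)$ is used correctly so that products of the form $E\,D\,E$ collapse to $E\,D$.
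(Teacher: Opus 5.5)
Your proof is correct and follows the paper's argument essentially step for step. You use the same Whitehead-type factorization $\diag(u,u^{-1})=w(u)w(-1)$ and the same commutator identity for (i), the same signed-transposition conjugation, the splitting $u_i=\lambda_i c_i$ with $F$ central plus normality of $E_n(R)$ for (ii), and the specialization $\lambda_i=1$ for (iii); the only cosmetic difference is that you first write $\GL_n(R)=E_n(R)D_n(R)$ via the explicit conjugation formula, whereas the paper observes directly that $E_n(R)D_n(F)$ is a subgroup containing $E_n(R)$ and $D_n(R)$.
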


\begin{proof}
We first prove (i) for $n=2$. For $c\in R^\times$, set
$$
w(c)=x_{12}(c)x_{21}(-c^{-1})x_{12}(c)
=\begin{pmatrix}0&c\\-c^{-1}&0\end{pmatrix}.
$$
Then $w(c)w(-1)=\diag(c,c^{-1})\in E_2(R)$. If $a,b\in R^\times$ and $[a,b]=aba^{-1}b^{-1}$, then
$$
\diag([a,b],1)
=\diag(a,a^{-1})\diag(b,b^{-1})
 \diag(a^{-1}b^{-1},ba)\in E_2(R).
$$
Indeed, each factor on the right has the form $\diag(c,c^{-1})$. Since $[R^\times,R^\times]$ is generated by such commutators, $\diag(c,1)\in E_2(R)$ for every $c\in[R^\times,R^\times]$.

Embedding the preceding calculation in the upper-left $2\times2$ block proves the asserted inclusion in $E_n(R)$. The signed transposition matrix acting on the $i$th and $j$th coordinates is obtained from the same elementary factorization as $w(1)$. Conjugation by this matrix moves the entry $c$ to any prescribed diagonal position. This proves (i).

For (ii), let $D=\diag(u_1,\ldots,u_n)\in D_n(R)$. By surjectivity, for each $i$ there is $\lambda_i\in F^\times$ such that $c_i=u_i\lambda_i^{-1}$ belongs to $[R^\times,R^\times]$. Applying part (i) in each diagonal position, we have $\diag(c_1,\ldots,c_n)\in E_n(R)$. Hence
$D=\diag(c_1,\ldots,c_n)\diag(\lambda_1,\ldots,\lambda_n)\in E_n(R)D_n(F)$. Since $D_n(F)$ normalizes $E_n(R)$, the $\mathrm{GE}$-property now implies $\GL_n(R)=E_n(R)D_n(F)$.

If $R^\times$ is perfect, every diagonal entry of an element of $D_n(R)$ belongs to $[R^\times,R^\times]$. Part (i) therefore implies $D_n(R)\subseteq E_n(R)$, and the $\mathrm{GE}$-property yields $\GL_n(R)=E_n(R)$. This proves (iii).
\end{proof}

We apply this lemma to $L=\Lc$. The graph $R_2$ satisfies the three conditions in \cite[Theorem~11]{AbramsArandaPino}, and hence $L$ is a unital purely infinite simple ring. Ara, Goodearl and Pardo prove that every purely infinite simple ring is a $\mathrm{GE}$-ring and that the natural homomorphism $R^\times\to K_1(R)$ induces an isomorphism $R^\times_{\mathrm{ab}}\cong K_1(R)$; see \cite[Theorem~2.4 and its proof]{AraGoodearlPardo}.

\begin{proposition}\label{prop:Kone-binary}
For $L=\Lc$, we have $K_1(L)=0$. Consequently, $L^\times$ is perfect and $\GL_n(L)=E_n(L)$ for every $n\geq2$.
\end{proposition}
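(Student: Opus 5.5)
The plan is to compute $K_1(L)$ directly from known $K$-theory of Leavitt path algebras, and then feed the answer into Lemma~\ref{lem:GE-reduction}(iii).

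\textbf{Step 1: the computation of $K_1(L)$.} For a row-finite graph $E$ with no sinks and adjacency matrix $A_E$, the $K_1$-group of $L_K(E)$ fits into an exact sequence (Ara--Brustenga--Cortiñas):
$$
K_1(K)^{E^0}\xrightarrow{\;1-A_E^t\;}K_1(K)^{E^0}\longrightarrow K_1(L_K(E))\longrightarrow \ker\bigl(1-A_E^t:K_0(K)^{E^0}\to K_0(K)^{E^0}\bigr)\longrightarrow 0 .
$$
For $R_2$ there is one vertex and $A_E=(2)$, so $1-A_E^t$ is multiplication by $-1$.
\begin{itemize}
\item On $K_0(K)=\mathbb Z$, multiplication by $-1$ is injective, so the kernel term vanishes.
\item On $K_1(K)=K^\times$, written multiplicatively, the map is $\lambda\mapsto\lambda^{-1}$, which is surjective, so the cokernel vanishes.
\end{itemize}
Hence $K_1(L)=0$. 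More generally, the same sequence gives $K_1(L_d)\cong K^\times/(K^\times)^{d-1}$, in line with the introduction.

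\textbf{Step 2: perfection of $L^\times$.} The paper has already recorded that $L$ is purely infinite simple. By \cite[Theorem~2.4]{AraGoodearlPardo}, $L$ is therefore a $\mathrm{GE}$-ring and $L^\times_{\mathrm{ab}}\cong K_1(L)$. With Step 1 this gives $L^\times_{\mathrm{ab}}=0$, so $L^\times$ is perfect.

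\textbf{Step 3: elementary generation.} Lemma~\ref{lem:GE-reduction}(iii) now yields $\GL_n(L)=E_n(L)$ for all $n\geq2$.

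\textbf{Main obstacle.} The only delicate point is justifying $K_1(L)=0$ from a citable source; Steps 2 and 3 are formal.

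If one prefers to avoid the long exact sequence, there is a hands-on route. By the Ara--Goodearl--Pardo isomorphism it suffices to show that every unit $u$ is a product of commutators. One can try the following.
\begin{itemize}
\item Use $\Theta_{(e,f)}$ and the $\mathrm{GE}$ property to see that $\diag(u,1)$ is congruent modulo $E_2(L)$ to a matrix that $\Theta$ carries back to a unit of the form $eue^*+ff^*$.
\item Show that $u$ and $eue^*+ff^*$ have the same class in $L^\times_{\mathrm{ab}}$.
\item Separately compare $eue^*+ff^*$ with $u$ via the relation $1=ee^*+ff^*$. This forces the class of $u$ to equal its own square, and hence to be trivial.
\end{itemize}
This is the Eilenberg-swindle-like content of the graph computation, specialised to $R_2$. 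I would cite the exact sequence first and keep this argument as a remark.
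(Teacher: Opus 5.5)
Your Steps 1--3 are the paper's proof. You apply the Ara--Brustenga--Corti\~nas sequence (valid since a field is regular supercoherent) with $1-A_{R_2}^t=(-1)$, so inversion on $K^\times$ and negation on $\mathbb Z$ being automorphisms gives $K_1(L)=0$; then Ara--Goodearl--Pardo gives $L^\times_{\mathrm{ab}}\cong K_1(L)=0$, and Lemma~\ref{lem:GE-reduction}(iii) gives $\GL_n(L)=E_n(L)$.

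Do keep the hands-on route only as a remark, because its last step is unsupported. The first two bullets yield $[eue^*+ff^*]=[ee^*+fuf^*]=[u]$, hence $[eue^*+fuf^*]=[u]^2$. The third bullet, however, needs $[eue^*+fuf^*]=[u]$. That does not follow from $1=ee^*+ff^*$, since $eue^*+fuf^*\neq u$ for non-scalar $u$, and given the first two bullets it is equivalent to $K_1(L)=0$ itself.
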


\begin{proof}
A field is regular supercoherent, so the exact sequence of \cite[Theorem~7.6]{AraBrustengaCortinas} applies. The adjacency matrix of $R_2$ is $A_{R_2}=(2)$, and hence $I-A_{R_2}^t=(-1)$. The relevant part of the exact sequence is
$$
K_1(K)\xrightarrow{-1}K_1(K)\longrightarrow K_1(L)
\longrightarrow K_0(K)\xrightarrow{-1}K_0(K).
$$
Under the identifications $K_1(K)=K^\times$ and $K_0(K)=\mathbb Z$, the first map is inversion and the last map is multiplication by $-1$. Both are automorphisms, so exactness shows $K_1(L)=0$.

By \cite[Theorem~2.4]{AraGoodearlPardo}, the canonical map induces $L^\times_{\mathrm{ab}}\cong K_1(L)=0$. Thus $L^\times$ is perfect. Since $L$ is a $\mathrm{GE}$-ring, Lemma~\ref{lem:GE-reduction}(iii) implies $\GL_n(L)=E_n(L)$ for every $n\geq2$.
\end{proof}

\begin{theorem}\label{thm:all-units}
For every field $K$, the Leavitt general linear group is the full unit group:
$$
\LGLtwo=L_K(R_2)^\times.
$$
\end{theorem}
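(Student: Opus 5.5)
The inclusion $\LGLtwo\subseteq L^\times$ holds by definition, since every generator $u(A,M,B)$ is a unit by Proposition~\ref{prop:leaf-matrix-inverse}. For the reverse inclusion, I would transport the problem to a matrix ring through one of the isomorphisms $\Theta_C$. Fix an ordered leaf set with at least two elements; the simplest choice is $C=(e,f)$, so $n=2$. By Proposition~\ref{prop:theta-isomorphism}, $\Theta_C:M_2(L)\to L$ is a unital $K$-algebra isomorphism. It therefore restricts to a group isomorphism $\GL_2(L)\to L^\times$.

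Let $u\in L^\times$ and put $X=\Theta_C^{-1}(u)=(\gamma_i^*u\gamma_j)_{i,j}\in\GL_2(L)$. Proposition~\ref{prop:Kone-binary} gives $\GL_2(L)=E_2(L)$, so $X$ is a finite product of elementary matrices $I_2+aE_{ij}$ with $i\neq j$ and $a\in L$. Applying $\Theta_C$, we see that $u$ is a finite product of the elements $\Theta_C(I_2+aE_{12})=1+eaf^*$ and $\Theta_C(I_2+bE_{21})=1+fbe^*$. Corollary~\ref{cor:elementary-image}, which rests on Theorem~\ref{thm:corner-transvections}, places each of these in $\LGLtwo$. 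Hence $u\in\LGLtwo$. The same argument also yields the explicit generating set $L^\times=\langle 1+eaf^*,\,1+fbe^*: a,b\in L\rangle$ stated in the introduction.

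The substantive work has already been done earlier: the corner-transvection theorem and the computation $K_1(L)=0$ combined with the $\mathrm{GE}$-property. The one point to check is that $\Theta_C$ maps $\GL_2(L)$ onto $L^\times$. This holds because $\Theta_C$ is a unital ring isomorphism, so it preserves invertibility in both directions. No diagonal factor $D_2(K)$ is needed here, since $K_1(L)$ vanishes. For general $d$ one would instead use Lemma~\ref{lem:GE-reduction}(ii) and absorb the diagonal part as $u(C,D,C)$.
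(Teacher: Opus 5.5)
Your proposal is correct and matches the paper's proof: take $C=(e,f)$, identify $L^\times$ with $\GL_2(L)$ via $\Theta_C$, use $\GL_2(L)=E_2(L)$ from Proposition~\ref{prop:Kone-binary}, and place the images $1+eaf^*$ and $1+fbe^*$ of the elementary matrices in $\LGLtwo$ by Corollary~\ref{cor:elementary-image}. Your remark about the explicit generating set is likewise the content of Corollary~\ref{cor:fixed-corner-generation}.
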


\begin{proof}
Let $C=(e,f)$. Proposition~\ref{prop:theta-isomorphism} induces an isomorphism $\Theta_C:\GL_2(L)\to L^\times$. Proposition~\ref{prop:Kone-binary} and Corollary~\ref{cor:elementary-image} say that
$$
L^\times
=\Theta_C\bigl(\GL_2(L)\bigr)
=\Theta_C\bigl(E_2(L)\bigr)
\leq\LGLtwo.
$$
The reverse inclusion follows from the definition of $\LGLtwo$ as a subgroup of $L^\times$.
\end{proof}

Together, Proposition~\ref{prop:single-leaf-not-exhaustive} and Theorem~\ref{thm:all-units} show that $\LM_2(K)\subsetneq L^\times$, whereas $\langle\LM_2(K)\rangle=L^\times$. Thus a unit need not admit a leaf-matrix presentation, but every unit is a finite product of units admitting such a presentation.

\begin{corollary}\label{cor:fixed-corner-generation}
Let $C=(\gamma_1,\ldots,\gamma_n)$ be an ordered leaf set with $n\geq2$. Then
$$
L^\times
=\Theta_C\bigl(E_n(L)\bigr)
=\left\langle 1+\gamma_i a\gamma_j^*:a\in L,\ i\neq j\right\rangle.
$$
In particular,
$$
L^\times
=\left\langle 1+eaf^*,\ 1+fbe^*:a,b\in L\right\rangle.
$$
\end{corollary}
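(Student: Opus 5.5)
The plan is to combine Theorem~\ref{thm:all-units} with Proposition~\ref{prop:Kone-binary} and Corollary~\ref{cor:elementary-image}, exactly as in the proof of Theorem~\ref{thm:all-units}, but now with the given ordered leaf set $C$ in place of $(e,f)$. By Proposition~\ref{prop:theta-isomorphism}, $\Theta_C:M_n(L)\to L$ is a unital ring isomorphism. It therefore restricts to a group isomorphism $\GL_n(L)\to L^\times$. Since $n\geq2$, Proposition~\ref{prop:Kone-binary} gives $\GL_n(L)=E_n(L)$, so
$$
L^\times=\Theta_C\bigl(\GL_n(L)\bigr)=\Theta_C\bigl(E_n(L)\bigr).
$$

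For the second equality, I will use the fact that $E_n(L)$ is by definition generated by the elementary matrices $I_n+aE_{ij}$ with $i\neq j$ and $a\in L$. Because $\Theta_C$ is a group isomorphism on units, it carries a generating set of $E_n(L)$ onto a generating set of $\Theta_C(E_n(L))$. Corollary~\ref{cor:elementary-image} computes these images as $\Theta_C(I_n+aE_{ij})=1+\gamma_i a\gamma_j^*$. Hence $\Theta_C(E_n(L))=\langle 1+\gamma_i a\gamma_j^*:a\in L,\ i\neq j\rangle$.

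The particular case follows by taking $C=(e,f)$, which is a leaf set obtained by one simple expansion of $\{1\}$. Here $n=2$, and the only pairs $(i,j)$ with $i\neq j$ are $(1,2)$ and $(2,1)$. These give exactly the generators $1+eaf^*$ and $1+fbe^*$.

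I do not expect a genuine obstacle, since the substantive input, $\GL_n(L)=E_n(L)$ for every $n\geq2$, is already Proposition~\ref{prop:Kone-binary}. The only point needing care is that $\Theta_C$ maps $\GL_n(L)$ \emph{onto} $L^\times$ and not merely into it. This holds because $\Theta_C$ is a bijective unital ring homomorphism, so its inverse also preserves invertibility.
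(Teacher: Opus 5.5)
Your proposal is correct and follows the paper's proof essentially verbatim. $\Theta_C$ identifies $\GL_n(L)$ with $L^\times$, Proposition~\ref{prop:Kone-binary} gives $\GL_n(L)=E_n(L)$, Corollary~\ref{cor:elementary-image} computes the images of the elementary generators, and $C=(e,f)$ yields the particular case; Theorem~\ref{thm:all-units} itself is not actually needed, as your own argument shows.
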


\begin{proof}
Proposition~\ref{prop:theta-isomorphism} identifies $L^\times$ with $\Theta_C(\GL_n(L))$, while Proposition~\ref{prop:Kone-binary} identifies $\GL_n(L)$ with $E_n(L)$. This proves the first equality. The second follows from Corollary~\ref{cor:elementary-image}, since the matrices $I_n+aE_{ij}$ generate $E_n(L)$. Taking $C=(e,f)$ proves the final formula.
\end{proof}

\section{Finite generation}\label{sec:finite-generation}

The finite-generation criterion uses the following coefficient argument. We identify $K$ with the scalar subfield $K\cdot1\subseteq L_K(R_2)$, and likewise identify every unital subring $A\subseteq K$ with $A\cdot1$.

\begin{lemma}\label{lem:coefficient-descent}
Let $u_1,\ldots,u_m\in L^\times$, and let $A$ be the unital subring of $K$ generated by the coefficients occurring in the unique $\mathcal B$-expansions of $u_1,u_1^{-1},\ldots,u_m,u_m^{-1}$. Then $\langle u_1,\ldots,u_m\rangle\subseteq L_A(R_2)^\times$. Moreover,
$$
L_A(R_2)\cap K=A
$$
inside $L_K(R_2)$.
\end{lemma}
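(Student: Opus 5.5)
The plan is to work entirely with the basis $\mathcal B$ and the injectivity statement recorded in Section~\ref{sec:leaf-sets}: for a unital subring $A\subseteq K$, the canonical map $L_A(R_2)\to L_K(R_2)$ is injective, and its image is the $A$-span of $\mathcal B$. We identify $L_A(R_2)$ with this image. It is a unital subring of $L_K(R_2)$, being the image of a ring homomorphism.

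\textbf{Step 1: the generated subgroup.} By the choice of $A$, each of $u_i$ and $u_i^{-1}$ is an $A$-linear combination of elements of $\mathcal B$. Hence each lies in $L_A(R_2)$. Since $u_iu_i^{-1}=u_i^{-1}u_i=1$ holds in $L_K(R_2)$, and $L_A(R_2)$ is a subring containing $1$ and both factors, each $u_i$ is a unit of $L_A(R_2)$. The unit group $L_A(R_2)^\times$ is a group. So the subgroup of $L^\times$ generated by the $u_i$ is contained in $L_A(R_2)^\times$, viewed inside $L^\times$. This proves the first assertion.

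\textbf{Step 2: the equality $L_A(R_2)\cap K=A$.}
\begin{itemize}
\item[$\supseteq$] The element $1$ lies in $\mathcal B$, so $a=a\cdot1$ is in the $A$-span for every $a\in A$.
\item[$\subseteq$] Let $\lambda\in K$ with $\lambda\cdot1\in L_A(R_2)$. Then $\lambda\cdot 1=\sum_{b\in\mathcal B}a_b b$ with $a_b\in A$, almost all zero. This is also a $K$-linear expansion in $\mathcal B$. Since $\mathcal B$ is a $K$-basis of $L_K(R_2)$, uniqueness of coefficients gives $a_1=\lambda$ and $a_b=0$ for $b\neq1$. Hence $\lambda\in A$.
\end{itemize}

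\textbf{Main obstacle.} The only delicate point is the identification of $L_A(R_2)$ with a subring of $L_K(R_2)$. This depends on the cited basis theorem of \cite{LopatkinNam} holding over an arbitrary commutative coefficient ring, which gives injectivity. That fact is already stated in the paper, so no further difficulty is expected.
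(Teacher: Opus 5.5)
Your proof is correct and follows the paper's argument essentially verbatim. Membership of $u_i^{\pm1}$ in the $A$-span of $\mathcal B$ gives the first assertion, and uniqueness of $\mathcal B$-expansions together with $1\in\mathcal B$ gives $L_A(R_2)\cap K=A$; your remark that this $A$-span is a unital subring, being the image of the canonical ring homomorphism, makes explicit a point the paper leaves implicit.
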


\begin{proof}
By the choice of $A$, the elements $u_i$ and $u_i^{-1}$ belong to $L_A(R_2)$. Hence $u_i\in L_A(R_2)^\times$, and every word in the $u_i^{\pm1}$ belongs to $L_A(R_2)^\times$.

The basis described above identifies $L_A(R_2)$ with the $A$-span of $\mathcal B$ and $L_K(R_2)$ with the $K$-span of the same set. Since $1\in\mathcal B$, uniqueness of the $\mathcal B$-expansion forces $L_A(R_2)\cap K=A$.
\end{proof}

We shall also use the following standard consequence of Zariski's lemma.

\begin{lemma}\label{lem:field-fg-ring}
A field which is finitely generated as a ring is finite.
\end{lemma}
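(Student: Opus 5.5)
The plan is to reduce to Zariski's lemma, in the form of the weak Nullstellensatz. Let $F$ be a field that is finitely generated as a ring, say $F=\mathbb Z[x_1,\ldots,x_k]$, the image of a surjection $\mathbb Z[X_1,\ldots,X_k]\to F$. I would split into cases according to the characteristic of $F$. I expect the characteristic-zero case to be the main obstacle.

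\emph{Positive characteristic.} Suppose $\operatorname{char}F=p>0$. Then $F$ is a field that is finitely generated as an algebra over $\mathbb F_p$. Zariski's lemma shows that $F$ is a finite extension of $\mathbb F_p$, and hence $F$ is finite.

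\emph{Characteristic zero.} Suppose $\operatorname{char}F=0$, so that $\mathbb Q\subseteq F$. Then $F$ is a finitely generated $\mathbb Q$-algebra, so Zariski's lemma makes $F$ a finite extension of $\mathbb Q$. I would now derive a contradiction by a denominator argument. Choose a $\mathbb Q$-basis $b_1,\ldots,b_m$ of $F$. Each ring generator $x_i$ and each product $b_jb_l$ has only finitely many denominators in its coordinates, so there is an integer $N\geq1$ such that the finitely generated $\mathbb Z[1/N]$-module
$$
\sum_j \mathbb Z[1/N]\,b_j
$$
is closed under multiplication, contains $1$, and contains every $x_i$. This module therefore contains $F$. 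Take a prime $\ell\nmid N$. Then $1/\ell\in\mathbb Q\subseteq F$, and its first coordinate in the basis expansion is $1/\ell$ times the first coordinate of $1$. To make this work I would take $b_1=1$ and clear the denominators in the $b_j$ beforehand. The coordinate $1/\ell$ does not lie in $\mathbb Z[1/N]$, which is a contradiction.

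Alternatively, I could use the maximal ideal $\ell\mathcal O$ of a localization to reach the same contradiction. In either version, the only delicate point is arranging the basis so that it is closed under multiplication up to the denominator $N$. Once that is arranged, the contradiction is immediate.
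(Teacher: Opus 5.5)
Your proposal is correct and follows essentially the same route as the paper. Both arguments apply Zariski's lemma in each characteristic, and in characteristic zero both choose $N$ so that $F$ is finite over $\mathbb Z[1/N]$. The only difference is the final contradiction: the paper uses the fact that a field integral over a subring forces that subring to be a field, whereas you read off the coordinate of $1/\ell$ in the $\mathbb Z[1/N]$-lattice spanned by a $\mathbb Q$-basis with $b_1=1$, which is a concrete version of the same step.
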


\begin{proof}
Let $F$ be a field which is finitely generated as a ring. Suppose first that $\operatorname{char}F=p>0$. Then $F$ is a finitely generated $\mathbb F_p$-algebra. Zariski's lemma shows that $F$ is a finite algebraic extension of $\mathbb F_p$, and hence $F$ is finite.

Suppose that $\operatorname{char}F=0$, and write $F=\mathbb Z[\xi_1,\ldots,\xi_r]$. Since $\mathbb Q\subseteq F$, we have $F=\mathbb Q[\xi_1,\ldots,\xi_r]$. Zariski's lemma therefore implies $[F:\mathbb Q]<\infty$. Choose a nonzero integer $N$ such that every $\xi_i$ is integral over $\mathbb Z[1/N]$. Since $1/N\in F$, we have
$F=\mathbb Z[1/N][\xi_1,\ldots,\xi_r]$, and consequently $F$ is integral over $\mathbb Z[1/N]$.

If a field is integral over a subring, then the subring is a field. Indeed, if $0\neq b$ belongs to the subring, then $b^{-1}$ is integral over it, and an integral equation for $b^{-1}$ places $b^{-1}$ in the subring. This would make $\mathbb Z[1/N]$ a field, which is impossible. Thus the characteristic-zero case cannot occur.
\end{proof}

\begin{theorem}\label{thm:finite-generation}
Let $K$ be a field. Then $L_K(R_2)^\times$ is finitely generated if and only if $K$ is finite.
\end{theorem}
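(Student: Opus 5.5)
Necessity will follow from Lemma~\ref{lem:coefficient-descent} and Lemma~\ref{lem:field-fg-ring}. Suppose $L^\times=\langle u_1,\ldots,u_m\rangle$ and let $A\subseteq K$ be the finitely generated unital subring given by the coefficients of the $\mathcal B$-expansions of the $u_i^{\pm1}$. Then $L^\times\subseteq L_A(R_2)^\times$. In particular, every scalar $\lambda\in K^\times$ lies in $L_A(R_2)\cap K=A$, so $A=K$. Thus $K$ is a field that is finitely generated as a ring, and hence $K$ is finite.

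For sufficiency, let $K$ be finite. By Corollary~\ref{cor:fixed-corner-generation} with $C=(e,f)$, it suffices to show that $E_2(L)$, or equivalently $L^\times=\Theta_C(E_n(L))$ for a convenient $n$, is finitely generated. I would use the standard Steinberg commutator relation $[x_{ij}(a),x_{jk}(b)]=x_{ik}(ab)$ for distinct $i,j,k$. This is why I pass to $n\geq3$: take $C=(e,fe,ff)$, so that $n=3$ and Corollary~\ref{cor:fixed-corner-generation} gives $L^\times=\Theta_C(E_3(L))$.

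The algebra $L$ is generated as a ring by the finitely many elements of $K\cup\{e,f,e^*,f^*\}$. Let $S$ be this finite set together with $1$. For $i\neq j$, let $H$ be the subgroup generated by the $x_{ij}(s)$ with $s\in S$ and $i\neq j$. The set $\{a\in L: x_{ij}(a)\in H\text{ for all }i\neq j\}$ is closed under addition, because $x_{ij}(a)x_{ij}(b)=x_{ij}(a+b)$. It is also closed under multiplication, by the commutator relation. Here I need care: $x_{ij}(ab)=[x_{ik}(a),x_{kj}(b)]$ requires $x_{ik}(a)$ and $x_{kj}(b)$ to lie in $H$ for the various index pairs, and this is exactly why membership is required for all $i\neq j$ simultaneously. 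Since the set contains $S$, it contains the ring generated by $S$, namely $L$. Hence $E_3(L)=H$ is finitely generated, and so is $L^\times=\Theta_C(H)$. Explicitly, $L^\times$ is generated by the finitely many elements $1+\gamma_i s\gamma_j^*$.

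The main obstacle is the multiplicative closure step, which needs a third index. With $n=2$ alone no commutator formula is available, so the move to $n=3$ via Corollary~\ref{cor:fixed-corner-generation} is essential. Beyond that, it remains only to note that $K$ finite makes $S$ finite, and that $L$ is spanned by monomials in $e,f,e^*,f^*$ with coefficients in $K$.
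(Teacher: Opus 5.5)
Your proposal is correct and follows the paper's proof essentially step for step. Necessity uses coefficient descent and Zariski's lemma; sufficiency uses the subring $T$ of entries $a$ with $x_{ij}(a)\in H$ inside $E_3(L)$ and then $L^\times=\Theta_C(E_3(L))$. The only difference is that you take the leaf set $(e,fe,ff)$ where the paper takes $(ee,ef,f)$, which is immaterial.
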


\begin{proof}
Suppose first that $K$ is finite. Then $L$ is finitely generated as a unital ring; for example, it is generated by the finite set $K\cup\{e,f,e^*,f^*\}$. Choose a finite set $S\subseteq L$ which generates $L$ as a unital ring, and let $C=(\gamma_1,\gamma_2,\gamma_3)=(ee,ef,f)$.

Let $H$ be the subgroup of $E_3(L)$ generated by the finite set
$$
\{x_{ij}(s):i\neq j,\ s\in S\cup\{1\}\}.
$$
Put $T=\{a\in L:x_{ij}(a)\in H\text{ for every }i\neq j\}$. The identities
$$
x_{ij}(a+b)=x_{ij}(a)x_{ij}(b)
\qquad
\text{and}
\qquad
x_{ij}(a)^{-1}=x_{ij}(-a)
$$
show that $T$ is closed under addition and additive inverses. If $a,b\in T$ and $i\neq j$, choose $k$ distinct from $i$ and $j$. The elementary commutator identity
$$
[x_{ik}(a),x_{kj}(b)]=x_{ij}(ab)
$$
shows that $ab\in T$. Since $1\in T$, the set $T$ is a unital subring of $L$. It contains $S$, and therefore $T=L$.

It follows that $H$ contains every elementary matrix $x_{ij}(a)$ with $a\in L$. Hence $H=E_3(L)$, so $E_3(L)$ is finitely generated. By Corollary~\ref{cor:fixed-corner-generation},
$L^\times=\Theta_C(E_3(L))$. Since
$\Theta_C(x_{ij}(s))=1+\gamma_i s\gamma_j^*$, the group $L^\times$ is generated by the finite set
$$
\{1+\gamma_i s\gamma_j^*:i\neq j,\ s\in S\cup\{1\}\}.
$$

Conversely, suppose that $L^\times=\langle u_1,\ldots,u_m\rangle$, and let $A$ be the finitely generated unital subring of $K$ obtained from Lemma~\ref{lem:coefficient-descent}. For every $\lambda\in K^\times$, the scalar unit $\lambda1$ is a word in the $u_i^{\pm1}$. Hence $\lambda\in L_A(R_2)\cap K=A$. Thus $K^\times\subseteq A$, and therefore $K=A$. It follows that $K$ is finitely generated as a ring. Lemma~\ref{lem:field-fg-ring} now shows that $K$ is finite.
\end{proof}

\section{The monomial subgroup and $\GL_\infty(K)$}
\label{sec:subgroups}

Recall that a matrix is monomial if each row and each column contains exactly one nonzero entry. Let $\Mtwo$ be the set of units admitting a leaf-matrix presentation $u(A,M,B)$ with $M$ monomial. After reordering $A$, such a presentation has the form
$$
w=\sum_{i=1}^n\lambda_i\alpha_i\beta_i^*
\qquad \text{where }\lambda_i\in K^\times.
$$
The elements for which all $\lambda_i$ are equal to $1$ form the usual copy of the Higman--Thompson group $V=G_{2,1}$ in $L^\times$; see \cite[Sections~1 and~3]{Pardo}.

Let $\mathcal D$ denote the diagonal subalgebra of $L$, that is,
$$
\mathcal D=\operatorname{span}_K\{\alpha\alpha^*:\alpha\in\Words\}.
$$
The assignment $\alpha\alpha^*\mapsto 1_{Z(\alpha)}$ identifies $\mathcal D$ with $C_{\mathrm{lc}}(\Cantor,K)$, the algebra of locally constant $K$-valued functions on $\Cantor$. Consequently,
$$
\mathcal D^\times\cong C_{\mathrm{lc}}(\Cantor,K^\times),
$$
where $K^\times$ carries the discrete topology.

\begin{proposition}\label{prop:monomial-subgroup}
The set $\Mtwo$ is a subgroup of $L^\times$, and
$$
\Mtwo\cong C_{\mathrm{lc}}(\Cantor,K^\times)\rtimes V.
$$
Here $V$ acts by pullback: $(g\cdot\mu)(x)=\mu(g^{-1}x)$. Thus
$$
(\lambda,g)(\mu,h)
=\bigl(\lambda\,(g\cdot\mu),gh\bigr),
$$
where $gh=g\circ h$.
\end{proposition}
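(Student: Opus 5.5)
The plan is to show that every $w\in\Mtwo$ factors uniquely as $w=d\,v$ with $d\in\mathcal D^\times$ and $v\in V$, and that $V$ normalizes $\mathcal D^\times$ through the pullback action. Write $w=\sum_{i=1}^n\lambda_i\alpha_i\beta_i^*$ and put
$$
d=\sum_{i=1}^n\lambda_i\alpha_i\alpha_i^*,\qquad v=\sum_{i=1}^n\alpha_i\beta_i^*.
$$
Lemma~\ref{lem:leaf-matrix-multiplication} gives $w=u(A,\diag(\lambda_i),A)\,u(A,I_n,B)=dv$. Here $d$ corresponds to the locally constant function taking the value $\lambda_i$ on $Z(\alpha_i)$, and $v\in V$ is the homeomorphism $\beta_i\xi\mapsto\alpha_i\xi$. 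Conversely, every $\mu\in C_{\mathrm{lc}}(\Cantor,K^\times)$ is constant on the cylinders of some leaf set $C$, because such a function takes finitely many values on clopen sets and any clopen set is a finite union of cylinders refined to a common leaf set. Hence $\mu=u(C,D,C)$ with $D$ diagonal, and $\mu g=u(C,D,C)\,u(C,I,B')$ for a presentation $g=u(C,I,B')$ of $g\in V$ whose range leaf set is $C$. Such a presentation exists after common refinement using Lemma~\ref{lem:refinement-identity}: refining the range of $g$'s table forces the matching refinement of the domain. So $\Mtwo=\mathcal D^\times V$ as sets.

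Next I will check the conjugation formula $g\,\mu\,g^{-1}=g\cdot\mu$. Refine until $\mu$ is constant on the cylinders of the domain leaf set $B$ of $g=\sum\alpha_i\beta_i^*$, so that $\mu=\sum c_i\beta_i\beta_i^*$. Then
$$
g\mu g^{-1}=\sum_i c_i\alpha_i\alpha_i^*,
$$
which is the function equal to $c_i$ on $Z(\alpha_i)=g(Z(\beta_i))$, namely $\mu\circ g^{-1}$. This shows $V$ normalizes $\mathcal D^\times$. Therefore $\mathcal D^\times V$ is closed under products, since $(\lambda g)(\mu h)=\lambda(g\mu g^{-1})gh$. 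It is also closed under inverses, since $(\lambda g)^{-1}=g^{-1}\lambda^{-1}=(g^{-1}\cdot\lambda^{-1})g^{-1}$. So $\Mtwo$ is a subgroup, and the multiplication rule is exactly the stated semidirect-product law.

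It remains to prove $\mathcal D^\times\cap V=\{1\}$, which makes the map $(\lambda,g)\mapsto\lambda g$ injective. I expect this to be the main obstacle, since it is where the algebra must be compared with the dynamics. Suppose $g=\sum\alpha_i\beta_i^*\in\mathcal D$. I would use the faithful representation of $L$ on functions on $\Cantor$, or more concretely its action on the $K$-span of infinite paths, where $\alpha\beta^*$ sends $\beta\xi$ to $\alpha\xi$ and kills the other paths. Under this action $g$ permutes the paths via the homeomorphism, while every element of $\mathcal D$ maps each path $x$ to a scalar multiple of $x$. Hence the homeomorphism fixes every point, so $g=1$ in $V$; the identification of $V$ with homeomorphisms is the standard one from \cite{Pardo}.

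Similarly, the identification $\mathcal D\cong C_{\mathrm{lc}}$ is injective because $\mathcal D$ acts faithfully on the infinite paths by multiplication by the function. A cleaner alternative, avoiding the representation, is to refine both leaf sets of $g$ to a common refinement and compare coefficients in the basis $\mathcal B$ after writing elements in terms of matrix units $\gamma_i\gamma_j^*$ of a single leaf set $C$ via $\Theta_C^{-1}$. There, elements of $\mathcal D$ become diagonal matrices with scalar entries, elements of $V$ become permutation matrices, and both statements become trivial matrix facts.
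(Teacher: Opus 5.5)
Your proof is correct, and its skeleton matches the paper's: the factorization $w=d_\lambda U_g$, refinement of tables to get $\mathcal D^\times V\subseteq\Mtwo$, the conjugation formula $U_g d_\mu U_g^{-1}=d_{\mu\circ g^{-1}}$, and finally $\mathcal D^\times\cap V=\{1\}$.

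The one genuine difference is in that last step. The paper stays inside $L$. If $U_g\in\mathcal D^\times$, then conjugation by $U_g$ is trivial on the commutative algebra $\mathcal D$. The conjugation formula then gives $\mu\circ g^{-1}=\mu$ for every $\mu$, and since indicators of cylinders separate points, $g=1$. You instead pass to the module $K^{(\Cantor)}$, on which $U_g$ permutes the basis via $g$ and $\mathcal D$ acts diagonally. For this step you only need that it is a module, not that it is faithful. In return, your route also justifies two facts the paper takes as given: that $\alpha\alpha^*\mapsto 1_{Z(\alpha)}$ defines an injective map $\mathcal D\to C_{\mathrm{lc}}(\Cantor,K)$, and that $g\mapsto U_g$ is injective. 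The paper's argument is shorter because it reuses the conjugation formula, but it leans on those identifications.

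One caution about your closing ``cleaner alternative'': it is not right as stated. For a fixed leaf set $C$, the matrix $\Theta_C^{-1}(U_g)=(\gamma_i^*U_g\gamma_j)_{i,j}$ is a permutation matrix only when $g$ carries the cylinders $Z(\gamma)$, $\gamma\in C$, onto one another by prefix replacement. In that case $U_g^m=u(C,P^m,C)=1$ for some $m\geq1$. So an element of infinite order, such as the one with table $ee\mapsto e$, $ef\mapsto fe$, $f\mapsto ff$, never has this form.

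The idea can be repaired. Refine so that the domain of $g$ is $C$, and write $U_g=\sum_i\gamma_i'\gamma_i^*$ and $d=\sum_i c_i\gamma_i\gamma_i^*$. Comparing diagonal entries gives $\gamma_i^*\gamma_i'=c_i\neq0$, and linear independence of $1$, the real paths and the ghost paths then forces $\gamma_i'=\gamma_i$. That is a coefficient argument, though, not a trivial matrix fact. Since this was only an aside, your main proof is unaffected.
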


\begin{proof}
Choose a leaf-matrix presentation $w=\sum_i\lambda_i\alpha_i\beta_i^*$ whose coefficient matrix is monomial. Put
$$
d_\lambda=\sum_i\lambda_i\alpha_i\alpha_i^*
\qquad\text{and}\qquad
U_g=\sum_i\alpha_i\beta_i^*,
$$
where $g\in V$ is represented by the table $g(\beta_i\xi)=\alpha_i\xi$. Then $d_\lambda\in\mathcal D^\times$, $U_g\in V$, and $w=d_\lambda U_g$.

Conversely, let $d\in\mathcal D^\times$ and $g\in V$. Choose a table $g(\beta_i\xi)=\alpha_i\xi$. By refining this table, we may assume that the locally constant function corresponding to $d$ is constant on each cylinder $Z(\alpha_i)$. If its value there is $\lambda_i$, then
$$
dU_g=\sum_i\lambda_i\alpha_i\beta_i^*\in\Mtwo.
$$
It follows that $\Mtwo=\mathcal D^\times V$.

For $\mu\in C_{\mathrm{lc}}(\Cantor,K^\times)$, conjugation by $U_g$ satisfies
$$
U_gd_\mu U_g^{-1}=d_{g\cdot\mu}
\qquad
\text{and}
\qquad
(g\cdot\mu)(x)=\mu(g^{-1}x).
$$
Thus $V$ normalizes $\mathcal D^\times$. If $U_g\in\mathcal D^\times$, then conjugation by $U_g$ is trivial on $\mathcal D$. Hence $\mu\circ g^{-1}=\mu$ for every locally constant function $\mu$. Since characteristic functions of cylinders separate points of $\Cantor$, we have $g=1$, and therefore $U_g=1$. Thus $\mathcal D^\times\cap V=\{1\}$.

We have therefore obtained the internal semidirect product $\Mtwo=\mathcal D^\times\rtimes V$. The stated multiplication follows from
$$
d_\lambda U_gd_\mu U_h
=d_\lambda\bigl(U_gd_\mu U_g^{-1}\bigr)U_{gh}
=d_{\lambda\,(g\cdot\mu)}U_{gh}.
$$
\end{proof}

Under this identification, $V$ corresponds to $\{1\}\rtimes V$, and hence
$$
V\leq\Mtwo\leq L^\times.
$$
If $K^\times\neq\{1\}$, choose $\lambda\neq1$. Then $\lambda ee^*+ff^*$ belongs to $\mathcal D^\times\setminus\{1\}$ and hence to $\Mtwo\setminus V$. If $K=\mathbb F_2$, then $\mathcal D^\times=\{1\}$ and $\Mtwo=V$.

The inclusion $\Mtwo\leq L^\times$ is always proper. Indeed, put
$$
q=ee^*+ef^*+ff^*
=u\left((e,f),
\begin{pmatrix}1&1\\0&1\end{pmatrix},
(e,f)\right).
$$
The coefficient matrix is invertible, so $q\in L^\times$. Suppose that $q\in\Mtwo$. By refining a leaf-matrix presentation with monomial coefficient matrix, we may assume that its domain leaf set is $\{e,f\}^N$ for some $N\geq1$. It would then follow that $q\mu$ is a nonzero scalar multiple of a single real path for every $\mu\in\{e,f\}^N$. On the other hand,
$$
qf^N=ef^{N-1}+f^N.
$$
The two terms on the right are distinct real paths, and their sum cannot be a scalar multiple of one real path. Therefore $q\notin\Mtwo$, and hence $\Mtwo\subsetneq L^\times$.

We next embed $\GL_\infty(K)$ in $L^\times$. For $i\geq1$, put $\alpha_i=e^{i-1}f$ and $F_{ij}=\alpha_i\alpha_j^*$. The words $\alpha_i$ are pairwise prefix-incomparable, and hence
$$
F_{ij}F_{k\ell}=\delta_{jk}F_{i\ell}.
$$
For $n\geq1$, put $p_n=\sum_{i=1}^nF_{ii}$.

\begin{proposition}\label{prop:GL-infinity}
There is an injective homomorphism
$$
\Phi:\GL_\infty(K)=\varinjlim_n\GL_n(K)\longrightarrow L^\times,
$$
where the transition maps are $M\mapsto\operatorname{diag}(M,1)$. Moreover, $\Phi(\GL_\infty(K))\subseteq\LM_2(K)$.
\end{proposition}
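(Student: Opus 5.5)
Define $\Phi_n:\GL_n(K)\to L^\times$ by
$$
\Phi_n(M)=\sum_{i,j=1}^n m_{ij}F_{ij}+(1-p_n).
$$
The plan is to check multiplicativity and injectivity of each $\Phi_n$, check compatibility with the transition maps, and then exhibit each image as a leaf-matrix presentation.

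\emph{Multiplicativity.} Since $F_{ij}F_{k\ell}=\delta_{jk}F_{i\ell}$, we have $p_nF_{ij}=F_{ij}=F_{ij}p_n$ for $i,j\leq n$. Hence $F_{ij}(1-p_n)=(1-p_n)F_{ij}=0$, and $1-p_n$ is an idempotent orthogonal to the corner $p_nLp_n$. Therefore $\Phi_n$ is multiplicative and $\Phi_n(I_n)=1$, so it lands in $L^\times$ with $\Phi_n(M)^{-1}=\Phi_n(M^{-1})$.

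\emph{Compatibility.} We have $\Phi_{n+1}(\diag(M,1))=\sum m_{ij}F_{ij}+F_{n+1,n+1}+1-p_{n+1}=\Phi_n(M)$, because $p_{n+1}=p_n+F_{n+1,n+1}$. The maps therefore glue to a homomorphism $\Phi$ on the direct limit.

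\emph{Injectivity.} Since $\alpha_r^*\alpha_s=\delta_{rs}$, it suffices to check that $\alpha_r^*\Phi_n(M)\alpha_s$ recovers the entries of $M$. For $r,s\leq n$, we get $\alpha_r^*F_{ij}\alpha_s=\delta_{ri}\delta_{js}$ and $\alpha_r^*(1-p_n)\alpha_s=\delta_{rs}-\delta_{rs}=0$. Thus $\alpha_r^*\Phi_n(M)\alpha_s=m_{rs}$. If $\Phi_n(M)=1$, these entries are $\delta_{rs}$, so $M=I_n$. Every element of the direct limit comes from some $\GL_n(K)$, so $\Phi$ is injective.

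\emph{Leaf-matrix presentations.} Consider the leaf set $C_n=(f,ef,\ldots,e^{n-1}f,e^n)$, obtained from $\{1\}$ by expanding $1,e,\ldots,e^{n-1}$ successively. It has cardinality $n+1$ and consists of $\alpha_1,\ldots,\alpha_n$ together with $e^n$. By Lemma~\ref{lem:leaf-set-relations}, $1=p_n+e^n(e^n)^*$, so $1-p_n=e^n(e^n)^*$. Hence
$$
\Phi_n(M)=u\bigl(C_n,\diag(M,1),C_n\bigr),
$$
which lies in $\LM_2(K)$ since $\diag(M,1)\in\GL_{n+1}(K)$. This identification $1-p_n=e^n(e^n)^*$, which makes the complement itself a matrix unit of a leaf set, is the only step needing care; the remaining verifications are routine.
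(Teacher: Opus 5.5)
Your proposal is correct and follows essentially the same route as the paper: you define $\Phi_n(M)=1-p_n+\sum m_{ij}F_{ij}$, realize it as $u(C_n,\operatorname{diag}(M,1),C_n)$ via $1-p_n=e^n(e^n)^*$, and check multiplicativity, compatibility via $p_{n+1}=p_n+F_{n+1,n+1}$, and injectivity via $\alpha_r^*\Phi_n(M)\alpha_s=m_{rs}$. The only cosmetic difference is that you obtain $1-p_n=e^n(e^n)^*$ from Lemma~\ref{lem:leaf-set-relations}, whereas the paper gets it by repeatedly applying $1=ee^*+ff^*$.
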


\begin{proof}
For $M=(m_{ij})\in\GL_n(K)$, define
$$
\Phi_n(M)=1-p_n+\sum_{i,j=1}^nm_{ij}F_{ij}.
$$
Let
$$
C_n=(f,ef,\ldots,e^{n-1}f,e^n).
$$
This is an ordered leaf set. Repeated use of $1=ee^*+ff^*$ shows that
$$
1-p_n=e^n(e^n)^*,
$$
and consequently
$$
\Phi_n(M)
=u\left(C_n,\operatorname{diag}(M,1),C_n\right).
$$
Thus $\Phi_n(M)\in\LM_2(K)$, with inverse $\Phi_n(M^{-1})$.

The matrix-unit relations, together with $(1-p_n)F_{ij}=F_{ij}(1-p_n)=0$ for $1\leq i,j\leq n$, imply $\Phi_n(MN)=\Phi_n(M)\Phi_n(N)$. Moreover, $p_{n+1}=p_n+F_{n+1,n+1}$, and therefore
$$
\Phi_{n+1}\bigl(\operatorname{diag}(M,1)\bigr)=\Phi_n(M).
$$
The maps $\Phi_n$ consequently induce a homomorphism $\Phi:\GL_\infty(K)\to L^\times$.

Finally, for $1\leq r,s\leq n$, we have
$$
\alpha_r^*\Phi_n(M)\alpha_s=m_{rs}1.
$$
Hence every $\Phi_n$ is injective, and so is the induced homomorphism $\Phi$. The formula for $\Phi_n(M)$ also shows that the image of $\Phi$ is contained in $\LM_2(K)$.
\end{proof}

\section{Roses with $d$ loops}\label{sec:higher-arity}

Let $d\geq2$, let $R_d$ be the rose with one vertex and loops $a_1,\ldots,a_d$, and put $L_d=L_K(R_d)\cong L_K(1,d)$. Thus
$$
a_i^*a_j=\delta_{ij}1
\qquad
\text{and}
\qquad
\sum_{i=1}^d a_ia_i^*=1.
$$
We identify the finite paths in $R_d$ with the free monoid $\{a_1,\ldots,a_d\}^*$ and regard them as the vertices of the rooted $d$-ary tree. The language of bases and simple expansions for this tree is standard in the theory of Higman--Thompson groups; see \cite[Section~1]{Pardo}.

A finite rooted subtree is full if it contains the root, is closed under predecessors, and every internal vertex has all $d$ children. A \emph{$d$-ary leaf set} is the set of leaves of such a subtree. Equivalently, it is obtained from $\{1\}$ by finitely many simple expansions
$$
\gamma\longmapsto
\{\gamma a_1,\ldots,\gamma a_d\}.
$$
An ordered $d$-ary leaf set is an ordering of a $d$-ary leaf set.

If $A=(\alpha_1,\ldots,\alpha_n)$ and $B=(\beta_1,\ldots,\beta_n)$ are ordered $d$-ary leaf sets and $M=(m_{ij})\in\GL_n(K)$, put
$$
u(A,M,B)=\sum_{i,j=1}^n m_{ij}\alpha_i\beta_j^*.
$$
The multiplication and inverse formulas from Lemma~\ref{lem:leaf-matrix-multiplication} and Proposition~\ref{prop:leaf-matrix-inverse} remain valid. Let $\LM_d(K)$ denote the set of units admitting a presentation $u(A,M,B)$ with $M\in\GL_n(K)$, and put $\LGL_d(K)=\langle\LM_d(K)\rangle\leq L_d^\times$.

If $C=(\gamma_1,\ldots,\gamma_n)$ is an ordered $d$-ary leaf set, then
$$
\gamma_i^*\gamma_j=\delta_{ij}1
\qquad
\text{and}
\qquad
\sum_{i=1}^n\gamma_i\gamma_i^*=1.
$$
Consequently, the map
$$
\Theta_C:M_n(L_d)\longrightarrow L_d,
\qquad
\text{by}
\qquad
\Theta_C((x_{ij}))
=\sum_{i,j=1}^n\gamma_i x_{ij}\gamma_j^*
$$
defines a unital $K$-algebra isomorphism with inverse $x\mapsto(\gamma_i^*x\gamma_j)_{i,j}$, as in Proposition~\ref{prop:theta-isomorphism}.

\begin{proposition}\label{prop:d-ary-elementary}
Let $C=(\gamma_1,\ldots,\gamma_n)$ be an ordered $d$-ary leaf set with $n\geq2$. Then
\begin{equation}\label{eq:d-ary-elementary}
\Theta_C\bigl(E_n(L_d)\bigr)\leq\LGL_d(K).
\end{equation}
More precisely, if $i\neq j$ and $a\in L_d$, then
$$
1+\gamma_i a\gamma_j^*\in\LGL_d(K).
$$
\end{proposition}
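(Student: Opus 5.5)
The plan is to repeat, in the $d$-ary setting, the argument behind Proposition~\ref{prop:scalar-leaf-transvection}, Lemma~\ref{lem:orthogonal-sum}, Theorem~\ref{thm:corner-transvections} and Corollary~\ref{cor:elementary-image}. First I will record the $d$-ary analogues of the basic facts. For paths $\alpha,\beta$ in $\{a_1,\ldots,a_d\}^*$, the product $\alpha^*\beta$ is given by the same four-case formula as in the binary case. Second, every finite antichain in the prefix order extends to a $d$-ary leaf set. For this I will copy the proof of Lemma~\ref{lem:extend-antichain}: take $N$ larger than all lengths, adjoin every word of length $N$ that extends no element of $F$, and check that the resulting cylinders partition $\{a_1,\ldots,a_d\}^{\mathbb N}$. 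Third, a finite set whose cylinders partition the infinite path space is a $d$-ary leaf set. This is the equivalence of descriptions already asserted in the section; if needed, I will justify it by taking the full subtree consisting of all proper prefixes of elements of the set.

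With these facts in hand, take prefix-incomparable $\alpha,\beta$ and $\lambda\in K$. Extend $\{\alpha,\beta\}$ to an ordered $d$-ary leaf set $C'=(\alpha,\beta,\ldots)$. Then $u(C',I+\lambda E_{12},C')=1+\lambda\alpha\beta^*\in\LM_d(K)$.

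Next, let $\gamma_i,\gamma_j$ be distinct leaves of $C$; they are prefix-incomparable. Write $a=\sum_r c_r\mu_r\nu_r^*$ with $c_r\in K$. This spanning form holds in any Leavitt path algebra, by the usual reduction of words using $a_i^*a_j=\delta_{ij}$. Then
\[
\gamma_i a\gamma_j^*=\sum_r c_r(\gamma_i\mu_r)(\gamma_j\nu_r)^*.
\]
Each pair $\gamma_i\mu_r$, $\gamma_j\nu_s$ is prefix-incomparable, so $(\gamma_j\nu_s)^*(\gamma_i\mu_r)=0$. Put $t_r=c_r(\gamma_i\mu_r)(\gamma_j\nu_r)^*$. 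Then all products $t_rt_s$ vanish, so $\prod_r(1+t_r)=1+\sum_r t_r=1+\gamma_i a\gamma_j^*$. Each factor lies in $\LM_d(K)$ by the previous step, so $1+\gamma_ia\gamma_j^*\in\LGL_d(K)$.

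Finally, $\Theta_C(I_n+aE_{ij})=1+\gamma_ia\gamma_j^*$ by definition of $\Theta_C$, and $\Theta_C$ is a group isomorphism on units. The elementary matrices generate $E_n(L_d)$, so \eqref{eq:d-ary-elementary} follows.

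The only step needing real care is the $d$-ary antichain-extension lemma, together with the identification of cylinder partitions with leaf sets. Once prefix-incomparable pairs are known to embed in an ordered $d$-ary leaf set, the remaining computations are verbatim copies of the binary case.
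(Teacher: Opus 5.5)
Your proposal is correct and follows essentially the same route as the paper: extend a prefix-incomparable pair to a $d$-ary leaf set to get $1+\lambda\alpha\beta^*\in\LM_d(K)$, expand $a=\sum_r c_r\mu_r\nu_r^*$ so that $1+\gamma_i a\gamma_j^*$ becomes a product of pairwise-orthogonal scalar transvections, and then apply $\Theta_C$ to the elementary generators. The only addition is that you sketch why a cylinder partition comes from a full subtree (via the tree of prefixes), which the paper simply takes as given.
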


\begin{proof}
Every finite antichain in the rooted $d$-ary tree is contained in a $d$-ary leaf set. Indeed, if $F$ is such an antichain, choose an integer $N$ greater than the length of every word in $F$ and adjoin all words $\mu$ of length $N$ for which no element of $F$ is a prefix. The cylinders determined by the resulting set partition the infinite path space.

It follows that if $\alpha$ and $\beta$ are prefix-incomparable and $\lambda\in K$, then $1+\lambda\alpha\beta^*$ belongs to $\LM_d(K)$: extend $\{\alpha,\beta\}$ to an ordered $d$-ary leaf set $D$ and use the elementary matrix $I+\lambda E_{12}$ in the copy of $M_{|D|}(K)$ associated with $D$.

Now write $a=\sum_{r=1}^m c_r\mu_r\nu_r^*$, where $c_r\in K$ and $\mu_r,\nu_r$ are finite paths. Then
$$
\gamma_i a\gamma_j^*
=\sum_{r=1}^m
c_r(\gamma_i\mu_r)(\gamma_j\nu_r)^*.
$$
Since $\gamma_i$ and $\gamma_j$ are prefix-incomparable, $\gamma_i\mu_r$ and $\gamma_j\nu_s$ are prefix-incomparable for every $r,s$. Hence all products between the summands on the right are zero, and
$$
1+\gamma_i a\gamma_j^*
=
\prod_{r=1}^m
\left(1+c_r(\gamma_i\mu_r)(\gamma_j\nu_r)^*\right)
\in\LGL_d(K).
$$
Finally, $\Theta_C(I_n+aE_{ij})=1+\gamma_i a\gamma_j^*$. Since the elementary matrices generate $E_n(L_d)$, \eqref{eq:d-ary-elementary} follows.
\end{proof}

\begin{theorem}\label{thm:d-ary-all-units}
For every field $K$ and every $d\geq2$, we have
$$
\LGL_d(K)=L_K(R_d)^\times.
$$
\end{theorem}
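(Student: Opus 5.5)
The plan follows the binary argument of Theorem~\ref{thm:all-units}. The one new ingredient is a diagonal correction, needed because $K_1(L_d)$ need not vanish. Fix the ordered $d$-ary leaf set $C=(a_1,\ldots,a_d)$, so $n=d\geq2$. Then $\Theta_C$ restricts to a group isomorphism $\GL_d(L_d)\to L_d^\times$. It therefore suffices to show that $\Theta_C(\GL_d(L_d))\leq\LGL_d(K)$; the reverse inclusion is trivial.

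The first step computes $K_1(L_d)$. The graph $R_d$ satisfies the hypotheses of \cite[Theorem~11]{AbramsArandaPino}, so $L_d$ is a unital purely infinite simple ring. By \cite[Theorem~2.4]{AraGoodearlPardo}, $L_d$ is a $\mathrm{GE}$-ring and $(L_d^\times)_{\mathrm{ab}}\cong K_1(L_d)$, with the isomorphism induced by the canonical map. Next I apply the exact sequence of \cite[Theorem~7.6]{AraBrustengaCortinas}. Here $I-A_{R_d}^t=(1-d)$, which gives
$$
K_1(K)\xrightarrow{\,1-d\,}K_1(K)\longrightarrow K_1(L_d)\longrightarrow K_0(K)\xrightarrow{\,1-d\,}K_0(K).
$$
The last map is multiplication by $1-d\neq0$ on $\mathbb Z$, so it is injective. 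Hence $K_1(K)=K^\times\to K_1(L_d)$ is surjective, with cokernel description $K^\times/(K^\times)^{d-1}$.

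The main obstacle is checking that this surjection is induced by the scalar inclusion $K\to L_d$. That is what Lemma~\ref{lem:GE-reduction}(ii) needs: surjectivity of $K^\times\to(L_d^\times)_{\mathrm{ab}}$, where $K$ is the central subfield $K\cdot1$. I would argue that the map $K_1(K)\to K_1(L_d)$ in the sequence of \cite{AraBrustengaCortinas} is induced by the inclusion of the vertex algebra $K^{(R_d^0)}=K\to L_d$, which here is exactly the scalar inclusion. If that identification is not explicit in the reference, a fallback is available. One shows directly that the composite $K^\times\to L_d^\times\to K_1(L_d)$ is onto, using the fact that every class in $K_1(L_d)$ comes from the vertex part. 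Naturality of the sequence under the unital inclusion $K\to L_d$ should suffice for this.

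With surjectivity in hand, Lemma~\ref{lem:GE-reduction}(ii) gives $\GL_d(L_d)=E_d(L_d)D_d(K)$. Proposition~\ref{prop:d-ary-elementary} gives $\Theta_C(E_d(L_d))\leq\LGL_d(K)$. For $D=\diag(\lambda_1,\ldots,\lambda_d)\in D_d(K)$ we have $\Theta_C(D)=\sum_i\lambda_ia_ia_i^*=u(C,D,C)$. This lies in $\LM_d(K)\subseteq\LGL_d(K)$. Since $\Theta_C$ is a homomorphism, it follows that $L_d^\times=\Theta_C(E_d(L_d))\,\Theta_C(D_d(K))\leq\LGL_d(K)$, which proves the theorem.
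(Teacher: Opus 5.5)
Your proposal is correct and follows the paper's proof essentially step for step. Purely infinite simplicity gives the $\mathrm{GE}$-property and $(L_d^\times)_{\mathrm{ab}}\cong K_1(L_d)$; the Ara--Brustenga--Corti\~nas sequence gives surjectivity of $K^\times\to K_1(L_d)$; Lemma~\ref{lem:GE-reduction}(ii) yields $\GL_d(L_d)=E_d(L_d)D_d(K)$; and $\Theta_C$ with $C=(a_1,\ldots,a_d)$ sends both factors into $\LGL_d(K)$. The point you flag, that the map $K_1(K)\to K_1(L_d)$ in the sequence is induced by the inclusion of the vertex algebra (here the scalars), is exactly what the paper uses implicitly with the words ``in particular,'' and your primary justification for it is the right one.
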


\begin{proof}
The graph $R_d$ satisfies the conditions of \cite[Theorem~11]{AbramsArandaPino}, so $L_d$ is a unital purely infinite simple ring. By \cite[Theorem~2.4 and its proof]{AraGoodearlPardo}, $L_d$ is a $\mathrm{GE}$-ring and the canonical map induces an isomorphism
$(L_d^\times)_{\mathrm{ab}}\cong K_1(L_d)$.

Since a field is regular supercoherent, the exact sequence of \cite[Theorem~7.6]{AraBrustengaCortinas} applies:
$$
K^\times\xrightarrow{\lambda\mapsto\lambda^{\,1-d}}K^\times
\longrightarrow K_1(L_d)
\longrightarrow\mathbb Z\xrightarrow{\,1-d\,}\mathbb Z.
$$
Multiplication by $1-d$ on $\mathbb Z$ is injective. Hence
$$
K_1(L_d)
\cong
\operatorname{coker}
\big(K^\times\xrightarrow{\lambda\mapsto\lambda^{\,1-d}}K^\times\big)
\cong
K^\times/(K^\times)^{d-1}.
$$
In particular, the homomorphism $K^\times\to K_1(L_d)\cong(L_d^\times)_{\mathrm{ab}}$ induced by the scalar inclusion is surjective. Lemma~\ref{lem:GE-reduction}(ii), applied with the central subfield $K\subseteq L_d$, therefore implies
$\GL_d(L_d)=E_d(L_d)D_d(K)$.

Take $C=(a_1,\ldots,a_d)$. The isomorphism $\Theta_C:M_d(L_d)\to L_d$ sends $E_d(L_d)$ into $\LGL_d(K)$ by Proposition~\ref{prop:d-ary-elementary}. It also sends every $D\in D_d(K)$ to $u(C,D,C)\in\LM_d(K)$. Consequently,
$$
L_d^\times
=\Theta_C\bigl(\GL_d(L_d)\bigr)
\leq\LGL_d(K).
$$
The reverse inclusion holds by definition.
\end{proof}

\begin{corollary}\label{cor:d-ary-finite-generation}
For every $d\geq2$, the group $L_K(R_d)^\times$ is finitely generated if and only if $K$ is finite.
\end{corollary}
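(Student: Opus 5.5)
The plan mirrors the proof of Theorem~\ref{thm:finite-generation}, with two changes for $d\geq3$. First, $L_d^\times$ is no longer equal to $\Theta_C(E_n(L_d))$ but only to $\Theta_C(E_n(L_d)D_n(K))$. Second, the coefficient descent has to be carried out in $L_A(R_d)$.

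\emph{Sufficiency.} Suppose $K$ is finite. Then $L_d$ is generated as a unital ring by the finite set $K\cup\{a_i,a_i^*:1\le i\le d\}$; let $S$ be such a generating set. Choose a $d$-ary leaf set with at least three leaves, for instance
\[
C=(a_1a_1,\ldots,a_1a_d,a_2,\ldots,a_d),
\]
which has $n=2d-1\geq3$ elements. Exactly as in Theorem~\ref{thm:finite-generation}, the set of $a\in L_d$ with $x_{ij}(a)\in H$ for all $i\neq j$ is a unital subring of $L_d$. Here $H$ is the subgroup generated by the $x_{ij}(s)$ with $s\in S\cup\{1\}$, and the subring property uses the commutator identity, which needs $n\geq3$. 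This subring contains $S$, so $E_n(L_d)=H$ is finitely generated. The argument of Theorem~\ref{thm:d-ary-all-units} applies verbatim with $n$ in place of $d$: Lemma~\ref{lem:GE-reduction}(ii) gives $\GL_n(L_d)=E_n(L_d)D_n(K)$, because $K^\times\to(L_d^\times)_{\mathrm{ab}}$ is surjective. Since $K$ is finite, $D_n(K)$ is a finite group. Hence $L_d^\times=\Theta_C(\GL_n(L_d))$ is generated by the finite set consisting of the $1+\gamma_i s\gamma_j^*$ together with the elements $u(C,D,C)$, $D\in D_n(K)$.

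\emph{Necessity.} Suppose $L_d^\times=\langle u_1,\dots,u_m\rangle$. I would fix a special edge, say $a_1$, and use the corresponding basis $\mathcal B_d$ of $L_A(R_d)$. This basis consists of $1$, real paths, ghost paths, and monomials $\alpha\beta^*$ whose last edges are not both $a_1$; it is a basis over any commutative unital ring $A$ by \cite[Theorem~3.7]{LopatkinNam}. Consequently $L_A(R_d)\to L_K(R_d)$ is injective with image the $A$-span of $\mathcal B_d$, and $L_A(R_d)\cap K=A$. Let $A$ be the unital subring of $K$ generated by the $\mathcal B_d$-coefficients of the elements $u_i^{\pm1}$. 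Then $A$ is finitely generated, and every unit, in particular every scalar $\lambda\in K^\times$, lies in $L_A(R_d)$. It follows that $K=A$ is finitely generated as a ring, so $K$ is finite by Lemma~\ref{lem:field-fg-ring}.

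The only point I expect to need care is the $d$-ary analogue of Lemma~\ref{lem:coefficient-descent}, namely that the cited normal-form basis theorem holds for $R_d$ over arbitrary commutative coefficient rings. \cite{LopatkinNam} is stated for general graphs, so I would simply quote it for $R_d$. Everything else is a direct transcription of the binary argument.
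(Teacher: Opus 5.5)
Your proposal is correct and follows the paper's proof essentially step for step. It uses the same leaf set $C=(a_1a_1,\ldots,a_1a_d,a_2,\ldots,a_d)$ with $n=2d-1\geq3$, and the same subring argument showing that $E_n(L_d)$ is finitely generated, combined with $\GL_n(L_d)=E_n(L_d)D_n(K)$ and the finiteness of $D_n(K)$; for necessity it uses the same coefficient descent through the special-edge basis, ending with Lemma~\ref{lem:field-fg-ring}.
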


\begin{proof}
Suppose first that $L_d^\times$ is generated by $u_1,\ldots,u_m$. Choose $a_1$ as the special edge, and let $\mathcal B_d$ be the corresponding basis of $L_d$. Let $A$ be the unital subring of $K$ generated by the coefficients occurring in the $\mathcal B_d$-expansions of $u_1^{\pm1},\ldots,u_m^{\pm1}$. The proof of Lemma~\ref{lem:coefficient-descent} applies verbatim and shows that
$$
\langle u_1,\ldots,u_m\rangle
\subseteq L_A(R_d)^\times
\qquad
\text{and}
\qquad
L_A(R_d)\cap K=A.
$$
Every scalar $\lambda\in K^\times$ is a unit of $L_d$, and hence belongs to $L_A(R_d)$. It follows that $\lambda\in A$ for every $\lambda\in K^\times$. Therefore $A=K$, so $K$ is finitely generated as a ring. Lemma~\ref{lem:field-fg-ring} shows that $K$ is finite.

Conversely, suppose that $K$ is finite. Then $L_d$ is finitely generated as a unital ring. Choose a finite set $S\subseteq L_d$ which generates $L_d$ as a unital ring. Expand the leaf $a_1$ in $\{a_1,\ldots,a_d\}$, and order the resulting leaf set as
$$
C=(a_1a_1,\ldots,a_1a_d,a_2,\ldots,a_d).
$$
Put $n=|C|=2d-1$, so $n\geq3$. Let $H$ be the subgroup of $E_n(L_d)$ generated by the finite set
$$
\{I_n+sE_{ij}:i\neq j,\ s\in S\cup\{1\}\}.
$$
Set $T=\{a\in L_d:I_n+aE_{ij}\in H\text{ for every }i\neq j\}$. The elementary identities
$x_{ij}(a+b)=x_{ij}(a)x_{ij}(b)$, $x_{ij}(a)^{-1}=x_{ij}(-a)$, and
$[x_{ik}(a),x_{kj}(b)]=x_{ij}(ab)$ for distinct $i,j,k$ show that $T$ is a unital subring of $L_d$. Since $S\subseteq T$, we have $T=L_d$, and hence $H=E_n(L_d)$. Thus $E_n(L_d)$ is finitely generated.

The scalar map $K^\times\to(L_d^\times)_{\mathrm{ab}}$ is surjective by the proof of Theorem~\ref{thm:d-ary-all-units}. Lemma~\ref{lem:GE-reduction}(ii) therefore implies
$\GL_n(L_d)=E_n(L_d)D_n(K)$. The group $D_n(K)$ is finite, so $\GL_n(L_d)$ is finitely generated. Finally, $\Theta_C$ induces an isomorphism $\GL_n(L_d)\cong L_d^\times$. Hence $L_d^\times$ is finitely generated.
\end{proof}

\section{Finite presentability and unstable $K_2$}
\label{sec:finite-presentability}

The preceding results identify the full unit groups and determine when they are finitely generated. Since every finitely presented group is finitely generated, Corollary~\ref{cor:d-ary-finite-generation} reduces the question of finite presentability to finite coefficient fields. Throughout this section, $K=\mathbb F_q$ and $L_d=L_K(R_d)$. The finiteness of $K$ will also ensure that $L_d$ is finitely presented as a unital ring and that the diagonal groups $D_n(K)$ are finite.

We first determine the matrix sizes arising from $d$-ary leaf sets. A simple expansion replaces one leaf by its $d$ children and therefore increases the number of leaves by $d-1$. Starting from $\{1\}$, $r$ successive simple expansions produce a leaf set of cardinality $1+r(d-1)$, and every finite $d$-ary leaf set is obtained in this way. Fix $r\geq0$, put $n=1+r(d-1)$, and assume that $n\geq5$. Choose an ordered $d$-ary leaf set $C$ of cardinality $n$. The map $\Theta_C$ restricts to an isomorphism $\GL_n(L_d)\to L_d^\times$. The lower bound on $n$ is needed for the universal central extension used below.

We recall the required unstable $K$-theory. Let $R$ be a unital ring and $n\geq3$. For $i\neq j$ and $a\in R$, write $x_{ij}(a)=I_n+aE_{ij}$, and let $E_n(R)$ be the subgroup of $\GL_n(R)$ generated by these elementary matrices. The Steinberg group $\St_n(R)$ has generators $X_{ij}(a)$, indexed by $i\neq j$ and $a\in R$, subject to the Steinberg relations
\[
\begin{aligned}
X_{ij}(a)X_{ij}(b)&=X_{ij}(a+b),\\
[X_{ij}(a),X_{k\ell}(b)]&=1
   &&(i\neq\ell,\ j\neq k),\\
[X_{ij}(a),X_{jk}(b)]&=X_{ik}(ab)
   &&(i,j,k\text{ distinct}).
\end{aligned}
\]
The elementary matrices satisfy the same relations. Thus $X_{ij}(a)\mapsto x_{ij}(a)$ defines a surjection
$$
\phi_n:\St_n(R)\longrightarrow E_n(R).
$$
Its kernel $K_2(n,R)=\ker\phi_n$ is the unstable $K_2$-group in rank $n$. For $n\geq3$, the group $E_n(R)$ is perfect: if $i,j,k$ are distinct, then $x_{ij}(a)=[x_{ik}(a),x_{kj}(1)]$.

The direct limits $E(R)=\varinjlim E_n(R)$ and $\St(R)=\varinjlim\St_n(R)$ are the stable elementary and Steinberg groups. The stable $K_2$-group is $K_2(R)=\ker(\St(R)\to E(R))$, and stabilization induces a natural homomorphism $\kappa_n:K_2(n,R)\to K_2(R)$. No stability assertion about $\kappa_n$ will be used.

The passage from the Steinberg group to the elementary group rests on the following standard fact about central extensions.

\begin{lemma}\label{lem:central-finite-presentation}
Let $1\to A\to\widetilde G\to G\to1$ be a central extension. If $\widetilde G$ is finitely presented, then $G$ is finitely presented if and only if $A$ is finitely generated.
\end{lemma}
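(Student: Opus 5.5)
The plan is to use the standard characterization of finite presentability in terms of relators: if $G=\langle X\mid R\rangle$ with $X$ finite, then $G$ is finitely presented if and only if the normal closure of $R$ in the free group $F(X)$ is the normal closure of some finite subset. This characterization does not depend on the chosen finite generating set $X$. We first fix a finite presentation $\widetilde G=\langle X\mid R\rangle$, with $X$ and $R$ finite, and write $\pi:\widetilde G\to G$ for the quotient map with kernel $A$. Then $G$ is generated by the images of $X$, and $G\cong F(X)/N$, where $N$ is the preimage of $A$ in $F(X)$. Since $N\supseteq\langle\langle R\rangle\rangle$, we have $N/\langle\langle R\rangle\rangle\cong A$.

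Suppose first that $A$ is finitely generated, say by the images of words $w_1,\ldots,w_k\in F(X)$. Because $A$ is central, it equals its own normal closure in $\widetilde G$. Hence $N$ is the normal closure in $F(X)$ of the finite set $R\cup\{w_1,\ldots,w_k\}$, and $G=\langle X\mid R,w_1,\ldots,w_k\rangle$ is finitely presented. This direction only uses that $A$ is normal; centrality lets us avoid taking normal closures inside $\widetilde G$.

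Conversely, suppose that $G$ is finitely presented. Since $G$ is finitely presented and $X$ is a finite generating set, the standard independence result shows that $N$ is the normal closure in $F(X)$ of finitely many elements $v_1,\ldots,v_m$. We may also adjoin the finite set $R$, since $R\subseteq N$. Mapping to $\widetilde G$, the subgroup $A=N/\langle\langle R\rangle\rangle$ becomes the normal closure in $\widetilde G$ of the images $\bar v_1,\ldots,\bar v_m\in A$. Each $\bar v_i$ is central, so its conjugates are all equal to $\bar v_i$. Therefore $A$ is generated as a group by $\bar v_1,\ldots,\bar v_m$, and in particular $A$ is finitely generated.

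The only point that needs care is the independence of finite presentability from the choice of finite generating set, namely that $N$ is normally finitely generated whenever $G$ admits some finite presentation. This is a standard fact (B.H. Neumann): given another finite presentation of $G$, one expresses each generator set in terms of the other via Tietze transformations. I would cite it rather than reprove it. Centrality is what converts the statement \lq\lq normally generated by finitely many elements\rq\rq{} into \lq\lq finitely generated,\rq\rq{} and that step is immediate.
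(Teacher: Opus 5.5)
Your proposal is correct and follows essentially the same route as the paper. For one direction you adjoin finitely many generators of $A$ as relators. For the converse you use that the kernel of $F(X)\to G$ is finitely normally generated, so its images normally generate $A$, and centrality turns normal generation into generation.
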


\begin{proof}
Suppose that $A=\langle a_1,\ldots,a_s\rangle$. Adjoining the relations $a_1=\cdots=a_s=1$ to a finite presentation of $\widetilde G$ produces a finite presentation of $G$.

Conversely, write $\widetilde G=F/N$, where $F$ is free of finite rank and $N$ is finitely normally generated. Let $M$ be the kernel of the composite map $F\to\widetilde G\to G$. Then $N\leq M$, $G\cong F/M$, and $A\cong M/N$. Since $G$ is finitely presented and $F$ has finite rank, $M$ is finitely normally generated in $F$. The images of a finite set of normal generators of $M$ normally generate $M/N$ in $F/N$. Since $A$ is central in $\widetilde G$, these images generate $A$ as a group.
\end{proof}

The Steinberg extension relates finite presentability of the unit group to finite generation of $K_2(n,L_d)$.

\begin{theorem}\label{thm:finite-presentation-reduction}
Let $K=\mathbb F_q$, let $d\geq2$, and let $n=1+r(d-1)\geq5$ for some $r\geq0$. The following conditions are equivalent:
\begin{enumerate}[label=\textup{(\roman*)}]
\item $L_d^\times$ is finitely presented;
\item $L_d^\times/K^\times$ is finitely presented;
\item $E_n(L_d)$ is finitely presented;
\item $K_2(n,L_d)$ is finitely generated.
\end{enumerate}
Moreover, $K_2(n,L_d)\cong H_2(E_n(L_d),\mathbb Z)$. Consequently, if these conditions hold for one such $n$, then they hold for every such $n$.
\end{theorem}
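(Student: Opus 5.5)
We work throughout with a fixed ordered $d$-ary leaf set $C$ of cardinality $n$, so that $\Theta_C$ restricts to an isomorphism $\GL_n(L_d)\cong L_d^\times$. The plan has four steps. First, show that $\St_n(L_d)$ is finitely presented. Second, compare $E_n(L_d)$, $\GL_n(L_d)$ and $L_d^\times/K^\times$ by finite-index and finite-kernel arguments. Third, apply Lemma~\ref{lem:central-finite-presentation} to the Steinberg extension. Fourth, identify $K_2(n,L_d)$ with $H_2(E_n(L_d),\mathbb Z)$.

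\textbf{Step 1: finite presentability of $\St_n(L_d)$.} Since $K$ is finite, $L_d$ is a finitely presented unital ring. A presentation is given by generators $K\cup\{a_i,a_i^*\}$ together with the finitely many field relations, the relations $a_i^*a_j=\delta_{ij}$ and $\sum a_ia_i^*=1$, and commutation of scalars with the $a_i^{(*)}$.

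We use the standard fact that for $n\geq4$ (certainly for $n\geq5$), a finite ring presentation yields a finite presentation of $\St_n(R)$. The generators are $X_{ij}(s)$ for $s$ in a finite generating set. The relations are
\begin{itemize}
\item the Steinberg relations among these generators;
\item additivity relations expressing the ring addition;
\item relations encoding each defining ring relation, via the elements $X_{ij}(w)$ defined by iterated commutators of monomials $w$.
\end{itemize}

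The argument mirrors the finite-generation argument in Corollary~\ref{cor:d-ary-finite-generation}. One defines $X_{ij}(w)$ for words $w$ by commutators $[X_{ik}(u),X_{kj}(v)]$. Having two spare indices, which needs $n\geq4$ and is more comfortable with $n\geq5$, is what makes this well defined: it lets us show independence of the auxiliary index and of the bracketing. This is the known result on finite presentation of Steinberg groups of finitely presented rings, and I would cite or sketch it. I expect this step to be the main obstacle. In particular, verifying that the finitely many imposed relations imply all Steinberg relations for arbitrary ring elements requires care with well-definedness.

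\textbf{Step 2: (i)$\Leftrightarrow$(ii)$\Leftrightarrow$(iii).} Lemma~\ref{lem:GE-reduction}(ii) and the proof of Theorem~\ref{thm:d-ary-all-units} give $\GL_n(L_d)=E_n(L_d)D_n(K)$ with $D_n(K)$ finite. Hence $E_n(L_d)$ has finite index in $\GL_n(L_d)\cong L_d^\times$. Finite presentability passes both up and down along finite-index subgroups, which gives (i)$\Leftrightarrow$(iii).

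Next, $K^\times$ is a finite central subgroup. For (i)$\Rightarrow$(ii), add finitely many relations killing the scalars. For (ii)$\Rightarrow$(i), use that an extension of a finitely presented group by a finite (hence finitely presented) group is finitely presented.

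\textbf{Step 3: (iii)$\Leftrightarrow$(iv).} The Steinberg extension $1\to K_2(n,L_d)\to\St_n(L_d)\to E_n(L_d)\to1$ is central for $n\geq5$; this is the standard centrality of unstable $K_2$ in this range. Combined with Step~1, Lemma~\ref{lem:central-finite-presentation} gives the equivalence.

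\textbf{Step 4: the $H_2$ identification and independence of $n$.} For $n\geq5$, $\St_n(R)$ is superperfect, i.e.\ $H_1=H_2=0$, and the Steinberg extension is the universal central extension of the perfect group $E_n(R)$. Therefore $K_2(n,L_d)\cong H_2(E_n(L_d),\mathbb Z)$.

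Finally, every admissible $n$ gives an isomorphism $\GL_n(L_d)\cong L_d^\times$. Condition (i) does not depend on $n$, so by the equivalences already proved, the conditions hold for one admissible $n$ exactly when they hold for all of them.
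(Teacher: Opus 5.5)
Your proposal follows the paper's proof step for step. A finite ring presentation of $L_d$ plus Krsti\'c--McCool gives finite presentability of $\St_n(L_d)$; the paper simply cites that result, so your Step~1 sketch is unnecessary. The finite-index and finite-central-kernel arguments give (i)$\Leftrightarrow$(ii)$\Leftrightarrow$(iii), and Lemma~\ref{lem:central-finite-presentation} applied to the Steinberg extension gives (iii)$\Leftrightarrow$(iv). The universal-central-extension property for $n\geq5$ gives $K_2(n,L_d)\cong H_2(E_n(L_d),\mathbb Z)$, and independence of $n$ follows because (i) does not involve $n$.

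The one imported input worth double-checking, in your version and the paper's alike, is centrality of $K_2(n,L_d)$ in $\St_n(L_d)$, on which (iii)$\Rightarrow$(iv) and the $H_2$ identification rest. Central closedness of $\St_n(R)$ for $n\geq5$ by itself only yields $H_2(E_n(R),\mathbb Z)\cong K_2(n,R)/[\St_n(R),K_2(n,R)]$. The classical centrality theorems cover commutative rings (van der Kallen) and almost commutative rings (Tulenbaev), so you should confirm that your reference actually covers a noncommutative ring such as $L_d$.
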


\begin{proof}
We first verify that $\St_n(L_d)$ is finitely presented. Write $q=p^m$, choose an irreducible polynomial $\bar f\in\mathbb F_p[t]$ of degree $m$, and let $f\in\mathbb Z[t]$ be a lift of $\bar f$. Then $K\cong\mathbb Z[t]/(p,f(t))$. The algebra $L_d$ has the finite $K$-algebra presentation
$$
L_d\cong K\langle a_1,\ldots,a_d,b_1,\ldots,b_d\rangle
\big/\big\langle b_i a_j-\delta_{ij}1,\ \sum_{i=1}^d a_i b_i-1\big\rangle.
$$
Combining these presentations and adjoining relations which make $t$ commute with the $a_i$ and $b_i$ shows that $L_d$ is finitely presented as a unital ring. By \cite[Theorem~3]{KrsticMcCool}, $\St_n(L_d)$ is finitely presented for every $n\geq4$.

We next compare the elementary group with the unit group. The map $\Theta_C$ restricts to an isomorphism $\GL_n(L_d)\to L_d^\times$. The proof of Theorem~\ref{thm:d-ary-all-units}, applied in rank $n$, establishes $\GL_n(L_d)=E_n(L_d)D_n(K)$. The finite group $D_n(K)$ normalizes $E_n(L_d)$, so $E_n(L_d)$ has finite index in $\GL_n(L_d)$. Since finite presentability is invariant under passage between a group and a finite-index subgroup, $L_d^\times$ is finitely presented if and only if $E_n(L_d)$ is finitely presented. Moreover, $K^\times$ is a finite central subgroup of $L_d^\times$, and a group is finitely presented if and only if its quotient by a finite normal subgroup is finitely presented. This proves the equivalence of \textup{(i)}, \textup{(ii)}, and \textup{(iii)}.

For $n\geq5$, the Steinberg extension
\begin{equation}\label{eq:unstable-Steinberg-extension}
1\longrightarrow K_2(n,L_d)\longrightarrow\St_n(L_d)
\xrightarrow{\phi_n}E_n(L_d)\longrightarrow1
\end{equation}
is the universal central extension of $E_n(L_d)$; see \cite[Chapter~III, Proposition~5.5.1]{Weibel}. The kernel of the universal central extension of a perfect group is its Schur multiplier; see \cite[Chapter~III, Recognition Theorem~5.4]{Weibel}. Hence
\begin{equation}\label{eq:unstable-Schur-multiplier}
K_2(n,L_d)\cong H_2(E_n(L_d),\mathbb Z).
\end{equation}
Since $\St_n(L_d)$ is finitely presented, Lemma~\ref{lem:central-finite-presentation} applied to \eqref{eq:unstable-Steinberg-extension} shows that $E_n(L_d)$ is finitely presented if and only if $K_2(n,L_d)$ is finitely generated. All four conditions are therefore equivalent. Condition~\textup{(i)} is independent of $n$, so the equivalence holds simultaneously for every $n=1+r(d-1)\geq5$.
\end{proof}

The theorem involves unstable $K$-theory. The stable $K_2$-group can be computed explicitly, but this calculation does not determine $K_2(n,L_d)$.

\begin{proposition}\label{prop:stable-Ktwo-Leavitt}
For every finite field $K=\mathbb F_q$ and every $d\geq2$,
$$
K_2(L_d)\cong\{\lambda\in K^\times:\lambda^{d-1}=1\}.
$$
Consequently, $K_2(L_d)$ is cyclic of order $\gcd(d-1,q-1)$. In particular, $K_2(L_d)=0$ if $K=\mathbb F_2$ or $d=2$.
\end{proposition}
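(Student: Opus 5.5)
We compute $K_2(L_d)$ from the same long exact sequence of \cite[Theorem~7.6]{AraBrustengaCortinas} that gave $K_1(L_d)$ in the proof of Theorem~\ref{thm:d-ary-all-units}. We use it one degree higher. Since $K=\mathbb F_q$ is regular supercoherent, the relevant segment is
$$
K_2(K)\xrightarrow{1-d}K_2(K)\longrightarrow K_2(L_d)\longrightarrow K_1(K)\xrightarrow{\lambda\mapsto\lambda^{1-d}}K_1(K).
$$
Here the maps $K_i(K)\to K_i(K)$ are induced by $I-A_{R_d}^t=(1-d)$. On $K_i(K)$ such a map acts as multiplication by $1-d$ in additive notation, that is, as $\lambda\mapsto\lambda^{1-d}$ on $K_1(K)=K^\times$. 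We first check this identification of the maps, exactly as was done for $K_0$ and $K_1$ earlier.

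The key input is Quillen's computation $K_2(\mathbb F_q)=0$ (Matsumoto--Steinberg suffice: every Steinberg symbol over a finite field is trivial). Exactness then gives an injection
$$
K_2(L_d)\hookrightarrow\ker\bigl(K^\times\xrightarrow{\lambda\mapsto\lambda^{1-d}}K^\times\bigr)
$$
which is also surjective. The kernel is $\{\lambda\in K^\times:\lambda^{1-d}=1\}=\{\lambda:\lambda^{d-1}=1\}$, which proves the displayed isomorphism.

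The consequences follow from the structure of $\mathbb F_q^\times$. Because $\mathbb F_q^\times$ is cyclic of order $q-1$, its $(d-1)$-torsion subgroup is cyclic of order $\gcd(d-1,q-1)$. If $d=2$, this gcd is $1$. If $K=\mathbb F_2$, then $q-1=1$. In both cases $K_2(L_d)=0$.

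The main point requiring care is the correct identification of the connecting data in the exact sequence. Specifically, we must confirm that the sequence in \cite[Theorem~7.6]{AraBrustengaCortinas} continues to degree $2$, with $K_2(K)$ appearing in the displayed position, and that the rightmost map is indeed $\lambda\mapsto\lambda^{1-d}$. We expect this to follow from the same indexing used in the $K_1$ computation, together with the fact that the sequence is a long exact sequence in all degrees. Once that is verified, the argument reduces to the vanishing of $K_2(\mathbb F_q)$.
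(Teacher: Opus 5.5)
Your proposal is correct and is essentially the paper's proof: you take the degree-$2$ segment of the exact sequence of \cite[Theorem~7.6]{AraBrustengaCortinas}, use $K_2(\mathbb F_q)=0$ to identify $K_2(L_d)$ with the $(d-1)$-torsion subgroup of $K^\times$, and read off the cyclic order $\gcd(d-1,q-1)$ from the structure of $\mathbb F_q^\times$. The point you flag is not an obstacle, because that theorem supplies the long exact sequence in all degrees, and the sign convention for the map induced by $(1-d)$ does not affect its kernel.
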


\begin{proof}
The adjacency matrix of $R_d$ is $A_{R_d}=(d)$, so $I-A_{R_d}^t=(1-d)$. Since a field is regular supercoherent, the graph $K$-theory exact sequence of \cite[Theorem~7.6]{AraBrustengaCortinas} applies. Using $K_1(K)=K^\times$, its relevant terms are
$$
K_2(K)\xrightarrow{\,1-d\,}K_2(K)\longrightarrow K_2(L_d)
\longrightarrow K^\times\xrightarrow{\lambda\mapsto\lambda^{\,1-d}}K^\times.
$$
The calculation $K_2(\mathbb F_q)=0$ from \cite[Chapter~III, Corollary~6.1.1]{Weibel} reduces exactness to
$$
K_2(L_d)\cong
\ker\bigl(K^\times\xrightarrow{\lambda\mapsto\lambda^{\,1-d}}K^\times\bigr)
=\{\lambda\in K^\times:\lambda^{d-1}=1\}.
$$
Since $K^\times$ is cyclic of order $q-1$, this subgroup is cyclic of order $\gcd(d-1,q-1)$.
\end{proof}

The finiteness of $K_2(L_d)$ reduces the question to the kernel of the stabilization homomorphism.

\begin{corollary}\label{cor:stabilization-obstruction}
With the notation of Theorem~\ref{thm:finite-presentation-reduction}, the group $L_d^\times$ is finitely presented if and only if $\ker\kappa_n$ is finitely generated. In particular, if $\kappa_n:K_2(n,L_d)\to K_2(L_d)$ is injective for some $n=1+r(d-1)\geq5$, where $r\geq0$, then both $L_d^\times$ and $L_d^\times/K^\times$ are finitely presented.
\end{corollary}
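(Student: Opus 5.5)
By Theorem~\ref{thm:finite-presentation-reduction}, $L_d^\times$ is finitely presented if and only if $K_2(n,L_d)$ is finitely generated. It therefore suffices to show that $K_2(n,L_d)$ is finitely generated if and only if $\ker\kappa_n$ is finitely generated.

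The plan is to use the exact sequence
$$
1\longrightarrow\ker\kappa_n\longrightarrow K_2(n,L_d)\xrightarrow{\kappa_n}\operatorname{im}\kappa_n\longrightarrow1.
$$
Here $\operatorname{im}\kappa_n$ is a subgroup of $K_2(L_d)$, which is finite by Proposition~\ref{prop:stable-Ktwo-Leavitt}. Since $K_2(n,L_d)$ is central in $\St_n(L_d)$, it is abelian, and so is $\ker\kappa_n$.

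We then argue in both directions.
\begin{enumerate}[label=\textup{(\alph*)}]
\item If $K_2(n,L_d)$ is finitely generated abelian, then its subgroup $\ker\kappa_n$ is finitely generated, because subgroups of finitely generated abelian groups are finitely generated (Noetherianity of $\mathbb Z$-modules).
\item If $\ker\kappa_n$ is finitely generated, then $K_2(n,L_d)$ is an extension of the finite group $\operatorname{im}\kappa_n$ by a finitely generated group. Taking generators of $\ker\kappa_n$ together with one lift of each element of $\operatorname{im}\kappa_n$ gives a finite generating set for $K_2(n,L_d)$.
\end{enumerate}
This proves the first assertion.

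For the ``in particular'' statement, suppose $\kappa_n$ is injective for some admissible $n$. Then $\ker\kappa_n=0$ is finitely generated, so $L_d^\times$ is finitely presented. Theorem~\ref{thm:finite-presentation-reduction}\textup{(ii)} then shows that $L_d^\times/K^\times$ is finitely presented as well.

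There is no real obstacle in this argument. The only points needing care are that the equivalence must be applied with the same $n$ for which $\kappa_n$ is defined, which is fine because Theorem~\ref{thm:finite-presentation-reduction} holds for every admissible $n$, and that finiteness of $K_2(L_d)$ is exactly what allows generation to pass between $K_2(n,L_d)$ and its subgroup $\ker\kappa_n$.
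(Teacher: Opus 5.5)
Your proof is correct and follows the paper's argument. Finiteness of $K_2(L_d)$ from Proposition~\ref{prop:stable-Ktwo-Leavitt} makes $\operatorname{im}\kappa_n$ finite, so $K_2(n,L_d)$ is finitely generated exactly when $\ker\kappa_n$ is, and Theorem~\ref{thm:finite-presentation-reduction} (including item (ii)) finishes. You spell out the two directions the paper leaves implicit; direction (a) does not even need abelianness, because $\ker\kappa_n$ has finite index and a finite-index subgroup of a finitely generated group is finitely generated by Schreier's lemma.
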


\begin{proof}
Proposition~\ref{prop:stable-Ktwo-Leavitt} shows that $K_2(L_d)$ is finite. The quotient
$$
K_2(n,L_d)/\ker\kappa_n\cong\operatorname{im}\kappa_n
$$
is therefore finite. Thus $K_2(n,L_d)$ is finitely generated if and only if $\ker\kappa_n$ is finitely generated. Theorem~\ref{thm:finite-presentation-reduction} completes the proof.
\end{proof}

When $K=\mathbb F_2$, we have $D_n(K)=\{I_n\}$, and $\Theta_C$ identifies $E_n(L_d)$ with $L_d^\times$.

\begin{corollary}\label{cor:Ftwo-finite-presentation}
Let $K=\mathbb F_2$ and let $n=1+r(d-1)\geq5$, where $r\geq0$. Then
$$
K_2(n,L_d)\cong H_2(L_d^\times,\mathbb Z),
$$
and $L_d^\times$ is finitely presented if and only if $H_2(L_d^\times,\mathbb Z)$ is finitely generated. If $\kappa_n$ is injective, then $K_2(n,L_d)=0$ and $\St_n(L_d)\cong E_n(L_d)\cong L_d^\times$.
\end{corollary}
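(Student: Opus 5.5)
The plan is to specialize Theorem~\ref{thm:finite-presentation-reduction} to $K=\mathbb F_2$. In that case $K^\times=\{1\}$ and the diagonal group $D_n(K)$ is trivial. The first step is to identify $E_n(L_d)$ with $L_d^\times$. Take an ordered $d$-ary leaf set $C$ of cardinality $n$; such sets exist because $n=1+r(d-1)$. The proof of Theorem~\ref{thm:d-ary-all-units}, run in rank $n$ through Lemma~\ref{lem:GE-reduction}(ii), gives $\GL_n(L_d)=E_n(L_d)D_n(K)=E_n(L_d)$. The map $\Theta_C$ restricts to an isomorphism $\GL_n(L_d)\to L_d^\times$. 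So $\Theta_C$ carries $E_n(L_d)$ isomorphically onto $L_d^\times$. As a check, Proposition~\ref{prop:stable-Ktwo-Leavitt}'s source sequence gives $K_1(L_d)\cong K^\times/(K^\times)^{d-1}=0$, so $L_d^\times$ is perfect, which agrees with the perfectness of $E_n(L_d)$ for $n\geq3$.

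Next, the isomorphism \eqref{eq:unstable-Schur-multiplier} in Theorem~\ref{thm:finite-presentation-reduction} gives $K_2(n,L_d)\cong H_2(E_n(L_d),\mathbb Z)$. Composing with the isomorphism induced on $H_2$ by $\Theta_C$ yields $K_2(n,L_d)\cong H_2(L_d^\times,\mathbb Z)$. Substituting this into the equivalence (i)$\Leftrightarrow$(iv) of Theorem~\ref{thm:finite-presentation-reduction} then shows that $L_d^\times$ is finitely presented if and only if $H_2(L_d^\times,\mathbb Z)$ is finitely generated.

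For the last assertion, suppose $\kappa_n$ is injective. Proposition~\ref{prop:stable-Ktwo-Leavitt} gives $K_2(L_d)\cong\{\lambda\in\mathbb F_2^\times:\lambda^{d-1}=1\}=0$, since $\mathbb F_2^\times$ is trivial. Injectivity then forces $K_2(n,L_d)=0$. Exactness of \eqref{eq:unstable-Steinberg-extension} makes $\phi_n$ an isomorphism $\St_n(L_d)\cong E_n(L_d)$, and the first step gives $E_n(L_d)\cong L_d^\times$.

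No step here is a real obstacle; the argument only assembles earlier results. The one point to state explicitly is that the decomposition $\GL_n(L_d)=E_n(L_d)D_n(K)$ holds in rank $n$, not just in rank $d$. The proof of Theorem~\ref{thm:d-ary-all-units} does give this, because Lemma~\ref{lem:GE-reduction}(ii) applies for every $n\geq2$.
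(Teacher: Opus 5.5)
Your proposal is correct and follows the paper's proof essentially step for step. You use $D_n(\mathbb F_2)=\{I_n\}$ to get $\GL_n(L_d)=E_n(L_d)\cong L_d^\times$ via $\Theta_C$, and transport the Schur-multiplier isomorphism and the equivalence (i)$\Leftrightarrow$(iv) of Theorem~\ref{thm:finite-presentation-reduction}; for the final claim you combine $K_2(L_d)=0$ with injectivity of $\kappa_n$ and exactness of the Steinberg extension, exactly as the paper does.
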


\begin{proof}
Since $K^\times=\{1\}$, we have $D_n(K)=\{I_n\}$. The proof of Theorem~\ref{thm:d-ary-all-units} therefore shows that $\GL_n(L_d)=E_n(L_d)$, and $\Theta_C$ identifies $E_n(L_d)$ with $L_d^\times$. The first two assertions follow from Theorem~\ref{thm:finite-presentation-reduction} and \eqref{eq:unstable-Schur-multiplier}.

Proposition~\ref{prop:stable-Ktwo-Leavitt} shows that $K_2(L_d)=0$. If $\kappa_n$ is injective, then $K_2(n,L_d)=0$. The extension \eqref{eq:unstable-Steinberg-extension} then identifies $\St_n(L_d)$ with $E_n(L_d)$.
\end{proof}

\begin{remark}\label{rem:unstable-Ktwo-obstruction}
The distinction between stable and unstable $K_2$ is essential here. The equality $K_2(L_d)=0$ asserts only that each element of $K_2(n,L_d)$ becomes trivial after finitely many stabilizations. It does not imply that $K_2(n,L_d)$ vanishes or is finitely generated.

For rings of finite Bass stable rank, the usual stability theorem identifies the unstable and stable groups in a suitable range. More precisely, if $\operatorname{sr}(R)=s+1<\infty$, then $K_2(n,R)\to K_2(R)$ is an isomorphism for $n\geq s+3$; see \cite[Chapter~III, Remark~5.5.2]{Weibel}. This criterion is unavailable here, since $\operatorname{sr}(L_d)=\infty$ by \cite[Theorem~2.8(2)]{AraPardoStableRank}. The isomorphisms $\Theta_C:M_n(L_d)\to L_d$ do not imply stability: they depend on $C$ and do not intertwine the standard matrix inclusions used to define $\kappa_n$.
\end{remark}


\begin{thebibliography}{99}

\bibitem{AbramsAnhPardo}
G. Abrams, P. N. \'{A}nh, and E. Pardo,
Isomorphisms between Leavitt algebras and their matrix rings,
\textit{J. Reine Angew. Math.} \textbf{624} (2008), 103--132.

\bibitem{AbramsArandaPino}
G. Abrams and G. Aranda Pino,
Purely infinite simple Leavitt path algebras,
\textit{J. Pure Appl. Algebra} \textbf{207} (2006), 553--563.

\bibitem{AraPardoStableRank}
P. Ara and E. Pardo,
Stable rank of Leavitt path algebras,
\textit{Proc. Amer. Math. Soc.} \textbf{136} (2008), 2375--2386.

\bibitem{AraBrustengaCortinas}
P. Ara, M. Brustenga, and G. Corti\~nas,
$K$-theory of Leavitt path algebras,
\textit{M\"unster J. Math.} \textbf{2} (2009), 5--34.

\bibitem{AraGoodearlPardo}
P. Ara, K. R. Goodearl, and E. Pardo,
$K_0$ of purely infinite simple regular rings,
\textit{$K$-Theory} \textbf{26} (2002), 69--100.

\bibitem{BrownloweSorensen}
N. Brownlowe and A. P. W. S\o rensen,
Leavitt $R$-algebras over countable graphs embed into $L_{2,R}$,
\textit{J. Algebra} \textbf{454} (2016), 334--356.

\bibitem{Freeland}
R. L. Freeland,
\textit{Relating Thompson's group $V$ to graphs of groups and Hecke algebras},
Ph.D. thesis, University of Cambridge, 2019,
doi:10.17863/CAM.52134.

\bibitem{KrsticMcCool}
S. Krsti\'c and J. McCool,
Presenting $\GL_n(k\langle T\rangle)$,
\textit{J. Pure Appl. Algebra} \textbf{141} (1999), 175--183.

\bibitem{LopatkinNam}
V. Lopatkin and T. G. Nam,
On the homological dimensions of Leavitt path algebras with coefficients in commutative rings,
\textit{J. Algebra} \textbf{481} (2017), 273--292.

\bibitem{Pardo}
E. Pardo,
The isomorphism problem for Higman--Thompson groups,
\textit{J. Algebra} \textbf{344} (2011), 172--183.

\bibitem{Weibel}
C. A. Weibel,
\textit{The $K$-book: An introduction to algebraic $K$-theory},
Graduate Studies in Mathematics, vol.~145,
American Mathematical Society, Providence, RI, 2013.

\end{thebibliography}
\end{document}